	\newcommand{\diff}[4]{\ensuremath{ \mathrm{d}^{#4 }_{ #3 } \left[ #1 \right] \left(  #2 \right)  }}
	\newcommand{\diffplus}[3]{\diff{#1}{#2}{#3}{+} }
	\newcommand{\diffmin}[3]{\diff{#1}{#2}{#3}{-} }
	\newcommand{\fdiff}[4]{\ensuremath{ \upsilon^{#4 }_{ #3 }  #1  \left(  #2 \right)  }}
	\newcommand{\fdiffplus}[3]{ \fdiff {#1}{#2}{+}{#3} }
	\newcommand{\fdiffmin}[3]{ \fdiff {#1}{#2}{-}{#3} }
	\newcommand{\fdiffpm}[3]{ \fdiff {#1}{#2}{\pm}{#3} }
	 \newcommand{\fracvar}[4]{\ensuremath{ \upsilon_{ #3 }^{ #4} \left[ #1 \right] \left(  #2 \right)   }}
	 \newcommand{\fracvarplus}[3]{ \fracvar {#1}{#2}{#3}{\epsilon+} }
	 \newcommand{\fracvarmin}[3]{ \fracvar {#1}{#2}{#3}{\epsilon -} }
	\newcommand{\llim}[3]{\ensuremath{ \lim\limits_{ #1 \rightarrow #2} #3 }}
	\newcommand{\fclass}[2]{\ensuremath{  \mathbb{#1}^{\, #2} }}
		\newcommand{\holder}[1]{\fclass{H}{#1} }
	 \newcommand{\osc}[4]{\ensuremath{ \mathrm{osc}_{ #3 }^{ #4} [ #1 ] \left(  #2 \right)   }}
	 \newcommand{\oscplus}[2]{ \osc {#1}{#2}{\epsilon}{+} }
	\newcommand{\oscmin}[2]{ \osc {#1}{#2}{\epsilon}{-} }
	\newcommand{\deltaop}[4]{\ensuremath{ \Delta_{ #3 }^{ #4} \left[ #1 \right] \left(  #2 \right)   }}
	\newcommand{\deltaplus}[2]{ \deltaop {#1}{#2}{\epsilon}{+} }
	\newcommand{\deltamin}[2]{ \deltaop {#1}{#2}{\epsilon}{-} }
	\newcommand{\epnt}{\; .}
	\newcommand{\ecma}{\; ,}
 \newtheorem{theorem}{Theorem}
 \newtheorem{lemma}{Lemma}
 \newtheorem{corollary}{Corollary}
 \newtheorem{proposition}{Proposition}
\newtheorem{definition}{Definition}
 \newtheorem{remark}{Remark}
 \newtheorem{example}{Example}
   \title[Product rules for Fractional variation]
   {Product rules for the Fractional variation and velocity of H\"olderian functions}
 	\author{ Dimiter Prodanov }
    \address[Dimiter Prodanov]{ 
    	Department of Environment, Health and Safety  \\
    	Neuroscience Research Flanders  \\
    	IMEC, Leuven, Belgium 
    }
\begin{document}

 \begin{abstract}
	 Fractional velocity is defined as the limit of the difference quotient of the increments of a function and its argument raised to a fractional power.
	 Fractional velocity can be suitable for characterizing singular behavior of derivatives of H\"olderian functions and non differentiable functions. 
	 The manuscript derives the product rules for fractional variation.  
	 Correspondence with integer-order derivatives is discussed.
	 It is demonstrated that for H\"older functions under certain conditions the product rules deviates from the Leibniz rule. This deviation is expressed by another quantity, fractional co-variation. 
	 Basic algebraic properties of the fractional co-variation are demonstrated. 		 
 			
 	\medskip
 			
 {\it MSC 2010\/}: Primary 26A27; Secondary  26A33,  26A12, 26A16, 26A30, 35R11, 47G30
 			
 			\smallskip
 			
 {\it Key Words and Phrases}:
 			fractional calculus;
 			non-differentiable functions;
 			singular functions;
 			H\"older classes;
 			pseudodifferential operators
 \end{abstract}

 		
    
      	  
 \maketitle

   	
   	\section{Introduction}
   	\label{seq:intro}
   	Research on fractional derivatives and fractional calculus has long history  \cite{Ross1977}. 
   	Surprisingly, until the end of the 20\textsuperscript{th} century only non-local definitions of fractional derivatives have been investigated. 
   	
   	Applications to physical systems exhibiting fractal behavior have  inspired the development of local definitions of fractional derivatives.
   	Historically, the first non-integral order local definition has been proposed by Cherbit as the notion of \textsl{$\alpha$-velocity} \cite{Cherbit1991}. It was defined as a differential quotient of the increment of the particle displacement and the fractional power of the time interval between measurements.
   	The concept implicitly implied Holder-continuity of the studied trajectory. 
   	The later definition of local fractional derivative due to Kolwankar and Gangal \cite{Kolwankar1997a} was based on localization of Riemann-Liouville fractional derivatives, however this derivative is difficult to calculate for a wide range of functions. 
 
    The correspondences between the integral and the quotient difference definitions have been recently investigated by Chen et al. \cite{Chen2010}.
   	On the other hand,  Ben Adda and Cresson demonstrated equivalence between the $\alpha$-velocity and the local fractional derivatives in the sense of Kolwankar and Gangal \cite{Adda2001, Adda2013}.    	
  	 
   	The present work uses the definition of \textsl{$\alpha$-fractional velocity} of functions introduced by Cherebit. Results for \textsl{$\alpha$-velocity} in this work are based on the previously introduced notion of fractional variation operators \cite{Prodanov2015}.    	
   	This manuscript demonstrates the product rule for \textsl{$\alpha$-velocity} and establishes its relationship with the classical Leibniz's product rule. 
   	
   	The main results can be stated in the following pair of equations
	   \[
   		\fdiffpm{ [f \, g]  }{x}{\alpha}    =   \fdiffpm{ f}{x}{\alpha}  g (x) +   \fdiffpm{ g}{x}{\alpha} f(x) 	\pm	[f,g]^{\pm}_\alpha (x) \ecma \\
	  \]
   	where in turn $\fdiffpm{ f}{x}{\alpha} $ designates the  \textsl{$\alpha$-fractional velocity} (or in short fractional velocity or {$\alpha$-velocity when discussing a particular order) of the function $f(x)$ of order $\alpha$ and
   	$  [f,g]^{\pm}_{\alpha} $ designates the fractional  co-variation(s) of the functions  $f(x)$ and $g(x)$ (of order $\alpha$).
   	\textsl{Fractional velocity} can be especially suitable for characterizing of singular behavior of derivatives of H\"olderian functions.
   	Such functions are considered, for example, in quantum mechanical systems \cite{Abbott1981} or in the physical theory of scale relativity \cite{Nottale1989, Nottale1998, Nottale2011} where geodesic trajectories are considered to be non-differentiable. 
   	
   	The present manuscript is organized as follows.
   	Section \ref{sec:definitions} gives general definitions and notational conventions.
   	Section \ref{sec:frdiff} introduces fractional variation operators and $\alpha$-fractional velocities.
   	Section \ref{sec:cofrdiff} introduces fractional co-variation operators.
   	Section \ref{sec:prodrule} proves the main result and demonstrates the conditions under which the usual Leibniz rule holds. 
   	
  	\section{General definitions and notational conventions}
  	\label{sec:definitions}
  	  	
  	The term \textit{function}  denotes the mapping $ f: \mathbb{R} \mapsto \mathbb{R} $ (or in some cases $\mathbb{C} \mapsto \mathbb{C}$). 
  	The notation $f(x)$ is used to refer to the value of the function at the point \textit{x}.
    The term \textit{operator}  denotes the mapping from one functional expression to another.
  	The symbol \fclass{C}{0} denotes the class of functions, which are continuous. 
  	The symbol \fclass{C}{n} -- the class of \textit{n}-times differentiable functions where  $n  \in \mathbb{N}$.
  	Square brackets are used for the arguments of operators, while round brackets are used for the arguments of functions.	
    $Dom[f]$ denotes the domain of definition of the function $f(x)$.
 	\begin{definition}
 	\label{def:deltas}
 	Let the parametrized difference operators acting on a function $f(x)$ be defined in the following way
 	\begin{align}
 	  	   	\Delta^{+}_{\epsilon} [f](x) & :=  f(x + \epsilon) - f(x) \ecma\\
 	  	   	\Delta^{-}_{\epsilon} [f](x) & :=  f(x) - f(x - \epsilon)  \ecma\\
 	  	   	\Delta^{2}_{\epsilon} [f](x) &:=  f(x + \epsilon) -2 f(x) + f(x - \epsilon) \ecma
 	\end{align}
 	where $\epsilon>0$. The first one we designate as \textit{forward difference} operator, 
 	the second one  as \textit{backward difference} operator and the third one as \textit{2\textsuperscript{nd} order difference} operator.
 	\end{definition}
   	
   	\begin{definition}
   		\label{def:holder}
   		Let \holder{\alpha} be the class of H\"older   $\mathbb{C}^0$  functions of degree $\alpha$, $\alpha \in (0,\, 1)$,
   		That is, we say that $f(x)$ is of class \holder{\alpha} if $\; \forall f (x) \in \holder{\alpha} $ there exist two positive constants 
   		$C, \delta \in \mathbb{R} $ which for given $  x,y \in Dom[ f ]$ such that for $|x-y| \leq \delta$ the following inequality holds
   		\[
   		| f (x) - f (y) |  \leq C |x-y|^\alpha \ecma
   		\]
   		where $| \cdot |$ denotes the norm of the argument.  		
   		Following Mallat and Hwang \cite{Mallat1992} for orders $n>1 $ the definition is extended in the following way: 
   		 H\"older class \holder{n+ \alpha}  designates the class of \fclass{\, C}{0} functions (of degree $n+\alpha$) for which the inequality 
   		\[
   		| f (x) - f (y) - P_n (x-y) |  \leq C |x-y|^{n +\alpha} \ecma
   		\]
   		holds. $P_n (.)$ designates a real-valued polynomial of degree $n \in \fclass{N}{}$ of the form
   		$
   		P_n (z) = \sum\limits_{k=1}^{n}{ a_k z^k} 
   		$,
   		where $P_0(z) = 0$ and $\alpha \in (0, \, 1]$.
   	\end{definition}
   	\begin{remark}
   	The polynomial $P_n(x)$ can be identified with the Taylor polynomial of order $n$ of $f(x)$ \cite{Prodanov2015}.	
    \end{remark}		
   	Under this definition we will focus mainly on functions for which $0< \alpha \leq 1$. 
   	These functions will be further denoted by the term \textbf{H\"olderian}.
   
    \begin{definition}
    	\label{def:osc}
    	Let the oscillation of a function $f(z)$ in the interval $[x, \ y ]$ be defined as 
    	$
    	\mathrm{osc}_{[x, y] }[f] := \sup\limits_z {[ f]} - \inf\limits_z {[ f]}
    	$
    	 for $ z \in \mathrm{[x, \ y ]} $ or in alternative notation by
    	\[
    	\oscplus{f}{x} := \sup_{\epsilon} {[ f]} (x) - \inf_{\epsilon} {[ f]} (x) 
    	\]
    	in the interval $ [x, x + \epsilon ]$;
    	or as
    	\[
    	\oscmin{f}{x} := \sup_{\epsilon} {[ f]} (x) - \inf_{\epsilon} {[ f]} (x) 
    	\]  
    	in the interval $ [ x - \epsilon, x ]$.
    	Let the oscillation about a point be given according to \cite{Trench2013}:
    	\[
    	\mathrm{osc}[f] (x) : = \llim{\epsilon}{0}{\sup_{[x -\epsilon, x + \epsilon]} {[ f]} -
    		\inf_{[x -\epsilon, x + \epsilon]} {[ f]}} \epnt
    	\] 
    \end{definition}
    
   	\section{Fractional variation and $\alpha$-fractional velocity of functions}
   	\label{sec:frdiff}
    Fractional variation operators have been introduced in a previous work in the following way \cite{Prodanov2015}: 
   	\begin{definition}
   		\label{def:fracvar}
   		Let the \textit{Fractional Variation} operators of order $\beta$ be defined as
   		\begin{align}
   			\label{eq:fracvar1}
   			\fracvarplus {f}{x}{\beta} :=   \frac{ f(x+ \epsilon) - f(x) }{\epsilon ^\beta} \ecma
   			\\
   			\fracvarmin {f}{x}{\beta} :=   \frac{ f(x)- f( x- \epsilon)  }{\epsilon  ^\beta} \ecma
   		\end{align}
   		where  $\epsilon >0$ and $0 < \beta \leq 1 $ are real parameters and $f(x)$ is a function.
   	The symbol  $\fracvarplus { \cdot}{ \cdot}{\beta}$ will be called \textsl{forward fractional variation} operator of order $\beta$ while $\fracvarmin {\cdot}{\cdot}{\beta}$ will be called \textsl{ backward fractional variation} operator.	
   	\end{definition}
   	
   	\begin{remark}
	\label{rem:diff2}
	The following representation of the fractional variation operators is equivalent:
	\begin{align*}
	\fracvarplus {f}{x}{\beta}  = \frac{\Delta^{+}_{\epsilon} [f] (x) }{\epsilon ^\beta}    \ecma
	\\
	\fracvarmin {f}{x}{\beta}  =  \frac{\Delta^{-}_{\epsilon} [f] (x) }{\epsilon ^\beta}   \epnt
	\end{align*}
   	\end{remark}
   	
   	\begin{remark}
   	\label{rem:diff1}
	Following the  notation of Cresson and Gref \cite{Cresson2005, Cresson2011} we denote the usual forward and backward differential maps as:
	\begin{flalign*}
	\diffplus{f}{x}{\epsilon} =\frac{f(x + \epsilon)-f(x)}{\epsilon} \ecma \\
	\diffmin{f}{x}{\epsilon} =\frac{f(x)- f(x - \epsilon)}{\epsilon} \epnt
	\end{flalign*}
	These maps, therefore, can be considered as particular cases of the Fractional variation operators according to Definition \ref{def:fracvar}.
   	\end{remark}

   We define \textsl{forward fractional velocity} and  \textsl{backward fractional velocity}, or in short fractional velocity or {$\alpha$-velocity when discussing a particular order, of order $\alpha$ in the following way:
   	 \begin{definition}[Fractional order velocity]
   	 	\label{def:frdiff}
   	 Let  the \textsl{fractional velocity} of fractional order $\alpha$ be defined as
  	  \begin{align}
  	   \label{eq:fracdiffa}
  	    	\fdiffplus {f}{x}{\alpha} &:= \llim{\epsilon}{0}{\frac{\Delta^{+}_{\epsilon} [f ] (x) }{\epsilon ^\alpha}}  \ecma   	\\
  	    	\fdiffmin {f}{x}{\alpha} &:= \llim{\epsilon}{0}{ \frac{\Delta^{-}_{\epsilon} [f ] (x) }{\epsilon ^\alpha}   }  \ecma
  	  \end{align}
  	  where  $\epsilon >0$ and  $0 < \alpha \leq 1 $ are real parameters and $f(x)$ is function.	
  	  \end{definition}

      \begin{proposition}[Continuous variation]
      	\label{th:fdiff}
   	 	If   	 $f^{\prime}(x)$  is continuous in the interval $x \in [x, x+ \epsilon]$ then 
   	\begin{flalign*}
   	\fdiffplus {f}{x}{\beta} =\frac{1}{\beta} \llim{\epsilon}{0}{ \epsilon ^{1-\beta}  f ^{\prime}(x + \epsilon)  }   \ecma
   	\\
   	\fdiffmin {f}{x}{\beta} = \frac{1}{\beta} \llim{\epsilon}{0}{ \epsilon ^{1-\beta}  f ^{\prime}(x - \epsilon)  }   \epnt
   	\end{flalign*}
   	 \end{proposition}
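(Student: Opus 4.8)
The plan is to recognize the defining limit as a $0/0$ indeterminate form in the variable $\epsilon$ and then apply L'H\^opital's rule. Unfolding Definition~\ref{def:frdiff},
\[
\fdiffplus{f}{x}{\beta} = \llim{\epsilon}{0}{\frac{f(x+\epsilon)-f(x)}{\epsilon^{\beta}}} \epnt
\]
Since $f'$ is continuous on $[x,x+\epsilon]$, $f$ is in particular continuous there, so the numerator tends to $0$ as $\epsilon\to 0^{+}$; because $\beta>0$ the denominator tends to $0$ as well. Hence the quotient is of indeterminate type $0/0$, and both $\epsilon\mapsto f(x+\epsilon)-f(x)$ and $\epsilon\mapsto\epsilon^{\beta}$ are differentiable for $\epsilon>0$.

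First I would differentiate numerator and denominator with respect to $\epsilon$, obtaining $f'(x+\epsilon)$ and $\beta\,\epsilon^{\beta-1}$ respectively. The candidate value of the limit is therefore
\[
\frac{1}{\beta}\,\llim{\epsilon}{0}{\epsilon^{1-\beta}\, f'(x+\epsilon)} \ecma
\]
which is precisely the claimed right-hand side. To license the use of L'H\^opital's rule I would check that this ratio-of-derivatives limit actually exists. Here the continuity of $f'$ on the closed interval is essential: it makes $f'$ bounded near $x$ and forces $f'(x+\epsilon)\to f'(x)$, while $\epsilon^{1-\beta}$ stays bounded because $1-\beta\geq 0$ for $0<\beta\leq 1$. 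The product thus converges, to $0$ when $\beta<1$ and to $f'(x)$ when $\beta=1$, so the hypotheses of L'H\^opital's rule are satisfied and the forward identity follows.

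The backward identity is obtained by the same argument, the only modification being the sign produced by the chain rule: differentiating $\epsilon\mapsto f(x)-f(x-\epsilon)$ with respect to $\epsilon$ yields $f'(x-\epsilon)$, whence $\fdiffmin{f}{x}{\beta}=\frac{1}{\beta}\lim_{\epsilon\to 0}\epsilon^{1-\beta}f'(x-\epsilon)$.

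I expect the only genuine obstacle to be the bookkeeping around the hypotheses of L'H\^opital's rule, specifically confirming that the differentiated quotient has an honest limit rather than a merely formal one, since L'H\^opital's rule transfers existence from the derivative-ratio to the original quotient and not conversely. The local boundedness of $f'$ granted by continuity is exactly what closes this gap, after which every remaining step is routine.
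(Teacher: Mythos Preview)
Your proposal is correct. The paper itself does not give a proof here, deferring instead to the earlier reference~\cite{Prodanov2015}; however, your L'H\^opital argument is exactly the technique the paper employs in the analogous computation for Theorem~\ref{th:frcovarlim}, so your approach is fully in line with the paper's methodology.
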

   	 \begin{proof}
   	 	By previous work   \cite{Prodanov2015}.
   	 \end{proof}
   	 The result is repeated for comparison with the expressions for fractional co-variation.
   	 
    \begin{remark} 
   	\label{prop:dualvar}
   	In contrast to the restrictive treatment of the integral-ordered derivatives, we will not require equality of the forward and backward $\alpha$-velocities.
   	In such way it will be possible to handle symmetry breaking, which is important for physical applications, such as scale relativity where explicit symmetry breaking is admitted \cite{Nottale2011}.
   	
   	Indeed, if $f(x)$ is defined in $[ x - \epsilon, x+\epsilon]$ and the limit
   	\[ 
   	\llim{\epsilon}{0}{\dfrac{\Delta_{\epsilon} ^2 [f] (x)}{\epsilon^\beta} } \neq 0
   	\] 
   	exists then  $\fdiffplus {f}{x}{ \beta} \neq \fdiffmin {f}{x}{ \beta}$.
   	The proof of the statement is detailed in \cite{Prodanov2015}.  
    \end{remark}
  	  \begin{example}
  	  	We will calculate the forward and backward fractional variations of order $1/2$ of $f(x)= \sqrt{x}$
  	  	about x=0.  
  	  	\[
  	  	\Delta^{+}_\epsilon f(x)= \sqrt{x+ \epsilon} - \sqrt{x} =	\frac{ \left( \sqrt{x+ \epsilon} - \sqrt{x} \right) . \left(\sqrt{x+ \epsilon} + \sqrt{x}  \right)  }{\sqrt{x+ \epsilon} + \sqrt{x} } =
  	  	\]
  	  	\[
  	  	\frac{ x+ \epsilon - x}{\sqrt{x+ \epsilon} + \sqrt{x}}= \frac{ \epsilon }{\sqrt{x+ \epsilon} + \sqrt{x}} 
  	  	\]
  	  	Then for $x=0$ $\fracvarplus{\sqrt{x}}{x}{1/2} = 1$.
  	  	In a similar manner
  	  	\[
  	  	\Delta^{-}_\epsilon f(x)= \sqrt{x} - \sqrt{x- \epsilon}  =	  \frac{ \left( \sqrt{x} -  \sqrt{x - \epsilon}   \right) . \left(\sqrt{x- \epsilon} + \sqrt{x}  \right)  }{\sqrt{x- \epsilon} + \sqrt{x} } =
  	  	\]
  	  	\[
  	  	\frac{ x - x +\epsilon}{\sqrt{x- \epsilon} + \sqrt{x}}= \frac{ \epsilon }{\sqrt{x- \epsilon} + \sqrt{x}} 
  	  	\]
  	  	Then for $x=0$ $\fracvarmin{\sqrt{x}}{x}{1/2} = - i$.
  	  	
  	  	The same result can be computed using Proposition \ref{th:fdiff}.
  	  	In this case we have 
  	  	\begin{flalign*}
  	  	\fdiffplus{\sqrt{x}}{0}{1/2} &= 2 \llim{\epsilon}{0}{}  \epsilon^{1-1/2} \frac{1}{2   \sqrt{0+ \epsilon}} =1 \\
  	  	\fdiffmin{\sqrt{x}}{0}{1/2} &= 2 \llim{\epsilon}{0}{}  \epsilon^{1-1/2} \frac{1}{2   \sqrt{0 - \epsilon}} = -i 
  	  	\end{flalign*}
  	  	
  	  \end{example}

   \begin{theorem}
   	\label{th:hosc}
   	Let $f(x)$ be defined and finite in the interval $[x, \ y]$.
   	Then 
   	\[
   	| f (x) - f (y) |  \leq \mathrm{osc}_{ |x-y|} [f] \epnt
   	\]
   \end{theorem}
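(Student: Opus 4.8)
The plan is to unwind Definition \ref{def:osc} and observe that both endpoint values $f(x)$ and $f(y)$ lie between the infimum and the supremum of $f$ over the interval, so that their difference cannot exceed the total range, which is precisely the oscillation. First I would set
\[
M := \sup_{z \in [x,y]} f(z), \qquad m := \inf_{z \in [x,y]} f(z),
\]
and note that, by Definition \ref{def:osc}, the quantity $\mathrm{osc}_{|x-y|}[f]$ is exactly the oscillation $\mathrm{osc}_{[x,y]}[f] = M - m$ of $f$ over an interval of length $|x-y|$; since that definition depends only on the interval and not on the order of its endpoints, the subscript $|x-y|$ is unambiguous when $y<x$ as well.

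The core of the argument is a short inequality chain. Because both $x$ and $y$ belong to $[x,y]$, the defining extremal properties of supremum and infimum give $m \leq f(x) \leq M$ and $m \leq f(y) \leq M$. Subtracting these two-sided bounds yields simultaneously
\[
f(x) - f(y) \leq M - m \quad\text{and}\quad f(y) - f(x) \leq M - m,
\]
and taking the larger of the two left-hand sides produces $|f(x) - f(y)| \leq M - m = \mathrm{osc}_{|x-y|}[f]$, which is the claim.

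There is no substantial obstacle here, as the statement is essentially definitional. The only point deserving a word of care is that finiteness of $f$ at each point of $[x,y]$ does not by itself force $M$ and $m$ to be finite; should $f$ be unbounded on the interval one has $\mathrm{osc}_{|x-y|}[f] = +\infty$, and the inequality holds trivially because its left-hand side $|f(x)-f(y)|$ is finite. Hence one may assume $M, m \in \mathbb{R}$ without loss of generality, after which the chain above applies verbatim.
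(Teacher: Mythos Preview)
Your proposal is correct and follows essentially the same approach as the paper: both arguments use that the endpoint values $f(x)$ and $f(y)$ are sandwiched between $\inf f$ and $\sup f$ on the interval, whence their difference is bounded by the oscillation. The only cosmetic distinction is that the paper splits into two cases according to the sign of $f(x+\epsilon)-f(x)$, whereas you handle both inequalities at once and take the larger left-hand side; your added remark on the unbounded case is a nice touch the paper omits.
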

   \begin{proof}
   	The proof follows from the definition of the local oscillation operator.
   	Let $y= x+ \epsilon$. We will consider two cases.
   	\begin{description}
   		\item[Case 1] Let $f(x+ \epsilon) \geq f(x)$. 
   		Since $f(x+ \epsilon) \leq \sup{[ f]} (x)$ and
   		$f(x) \geq  \inf{[ f]} (x)$
   		then subtracting the inequalities leads to
   		\[
   		f(x+ \epsilon) - f(x) \leq \oscplus{f}{x} \epnt
   		\]
   		\item[Case 2] Let $f(x+ \epsilon) \leq f(x)$.
  		Since $f(x) \leq \sup{[ f]} (x)$ and
  		$f(x + \epsilon) \geq  \inf{[ f]} (x)$
  		then subtracting the inequalities leads to
  		\[
  		-f(x+ \epsilon) + f(x) \leq \oscplus{f}{x}   \epnt
  		\]
  		Therefore, 
  		\[
  			| f (x +\epsilon ) - f (x) |  \leq 	\oscplus{f}{x} \epnt
  		\]
   	\end{description}
   \end{proof}
   \begin{corollary}
   	\label{corr:mono1}
   	Let $f(z)$ be monotone in $[x,y]$. 
   	Then 
   	\[
   	| f (x ) - f (y) |  = \mathrm{osc}_{ [x, y]} [f] \epnt
   	\]
   \end{corollary}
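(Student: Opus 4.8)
The plan is to combine the inequality already established in Theorem \ref{th:hosc} with the special structure forced by monotonicity. Theorem \ref{th:hosc} supplies the bound $|f(x) - f(y)| \leq \mathrm{osc}_{[x,y]}[f]$ directly, so only the reverse inequality — equivalently, the precise identification of the supremum and infimum — remains to be shown.

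First I would fix the orientation of the monotonicity. Suppose $f$ is non-decreasing on $[x,y]$. Then for every $z \in [x,y]$ one has $f(x) \leq f(z) \leq f(y)$, so that $f(y)$ is an upper bound attained at $z=y$ while $f(x)$ is a lower bound attained at $z=x$. Consequently $\sup_z [f] = f(y)$ and $\inf_z [f] = f(x)$, which by the definition of the local oscillation operator gives $\mathrm{osc}_{[x,y]}[f] = f(y) - f(x) = |f(y) - f(x)|$, using $f(y) \geq f(x)$.

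The non-increasing case is symmetric: here $f(x) \geq f(z) \geq f(y)$ for all $z \in [x,y]$, so $\sup_z [f] = f(x)$ and $\inf_z [f] = f(y)$, whence $\mathrm{osc}_{[x,y]}[f] = f(x) - f(y) = |f(x) - f(y)|$. In either case the extrema are realized at the two endpoints and the oscillation collapses to the absolute difference of the endpoint values, matching the bound of Theorem \ref{th:hosc} and upgrading it to an equality.

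I do not expect a genuine obstacle here; the only point meriting a word of care is the claim that the supremum and infimum of a monotone function over the closed interval are actually attained at the endpoints, so that no strict inequality can survive. This is immediate from the two chains of monotonicity inequalities above and, notably, does not require continuity of $f$.
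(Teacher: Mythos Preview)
Your argument is correct and is exactly what the paper has in mind: the paper merely states that the result ``follows immediately from the definitions and Theorem \ref{th:hosc}'', and your proposal unpacks precisely this, using Theorem \ref{th:hosc} for the upper bound and the monotonicity chain to identify the endpoint values with the supremum and infimum for the matching lower bound.
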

   The proof follows immediately from the definitions and  Theorem \ref{th:hosc}.
   \begin{lemma}[Limit of the oscillatory quotient]
   	\label{th:limosc}
   	 \begin{flalign*}
   	\llim{\epsilon}{0}{ }  \frac{ \oscplus{f}{x} }{ \epsilon ^\beta} = \left| \fdiffplus {f}{x}{ \beta} \right| \ecma \\
   	\llim{\epsilon}{0}{ }  \frac{ \oscmin{f}{x} }{ \epsilon ^\beta} = \left| \fdiffmin {f}{x}{ \beta} \right| \epnt
   	 \end{flalign*}
   \end{lemma}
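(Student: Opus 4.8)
The plan is to sandwich the forward oscillation quotient between two quantities, each converging to $\left| \fdiffplus{f}{x}{\beta} \right|$, on the assumption that the forward velocity $L := \fdiffplus{f}{x}{\beta}$ exists and is finite; the backward identity follows by the mirror-image argument on $[x-\epsilon,\,x]$, so I treat only the forward case. For the lower bound I would invoke Theorem~\ref{th:hosc} with $y = x+\epsilon$: since $f(x+\epsilon)$ and $f(x)$ are values taken on $[x,\,x+\epsilon]$, that theorem gives $\left| \deltaplus{f}{x} \right| = |f(x+\epsilon) - f(x)| \leq \oscplus{f}{x}$. Dividing by $\epsilon^\beta$ and passing to the limit yields $|L| \leq \liminf_{\epsilon \to 0} \oscplus{f}{x} / \epsilon^\beta$.

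The upper bound is the crux, because the oscillation registers the extreme values of $f$ over the whole interval rather than merely its endpoints. Here I would exploit the expansion guaranteed by the existence of $L$. Setting $g(t) := f(x+t) - f(x)$ for $t \in [0,\,\epsilon]$, the definition of the velocity lets me write $g(t) = L\, t^\beta + \rho(t)$ with $|\rho(t)| \leq \eta(\epsilon)\, t^\beta \leq \eta(\epsilon)\,\epsilon^\beta$, where $\eta(\epsilon) := \sup_{0 < t \leq \epsilon} \left| g(t)/t^\beta - L \right| \to 0$ as $\epsilon \to 0$. Because $t^\beta$ is monotone increasing on $[0,\,\epsilon]$, the supremum and infimum of $L\,t^\beta$ are attained at the endpoints, equalling $\max(0,\, L\epsilon^\beta)$ and $\min(0,\, L\epsilon^\beta)$. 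Splitting off the remainder in the supremum and infimum of $g$ then gives
\[
\oscplus{f}{x} = \sup_{t \in [0,\epsilon]} g(t) - \inf_{t \in [0,\epsilon]} g(t) \leq |L|\,\epsilon^\beta + 2\,\eta(\epsilon)\,\epsilon^\beta \ecma
\]
where I used $\max(0,a) - \min(0,a) = |a|$ with $a = L\epsilon^\beta$. Dividing by $\epsilon^\beta$ and letting $\epsilon \to 0$ produces $\limsup_{\epsilon \to 0} \oscplus{f}{x} / \epsilon^\beta \leq |L|$.

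Combining the two estimates forces the limit to exist and to equal $|L| = \left| \fdiffplus{f}{x}{\beta} \right|$, which is the forward assertion. The step I expect to require the most care is the uniform control of the remainder across the whole interval: the bound $|\rho(t)| \leq \eta(\epsilon)\, t^\beta$ must hold simultaneously for every interior $t \in (0,\,\epsilon]$, not just at the right endpoint. This is, however, exactly what the existence of the velocity as the $\epsilon \to 0$ limit of the difference quotient supplies, since convergence of $g(t)/t^\beta$ to $L$ makes the tail supremum $\eta(\epsilon)$ vanish. The backward statement is obtained verbatim after replacing $[x,\,x+\epsilon]$ by $[x-\epsilon,\,x]$ and $\deltaplus{f}{x}$ by $\deltamin{f}{x}$.
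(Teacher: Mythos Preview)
Your argument is correct and proceeds along a genuinely different line from the paper. The paper's proof asserts that for sufficiently small $\epsilon'$ the function $f$ is monotone on $[x,\,x+\epsilon']$, so that by Corollary~\ref{corr:mono1} one has $\oscplus{f}{x} = \left|\deltaplus{f}{x}\right|$ identically for all small $\epsilon$, and the claim follows at once. You instead sandwich the oscillation quotient between $\left|\deltaplus{f}{x}\right|/\epsilon^\beta$ (via Theorem~\ref{th:hosc}) and $|L| + 2\eta(\epsilon)$ (via the expansion $f(x+t)-f(x) = L\,t^\beta + \rho(t)$ with a remainder controlled uniformly in $t\in(0,\epsilon]$). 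The paper's route is shorter, but it rests on eventual one-sided monotonicity of $f$ at $x$ --- a hypothesis neither stated in the lemma nor implied by the mere existence of the $\beta$-velocity (for instance $f(t) = t^\beta + t\sin(1/t)$ near $t=0$ has $\fdiffplus{f}{0}{\beta}=1$ for $0<\beta<1$ yet is monotone on no interval $[0,\epsilon']$). Your squeeze argument needs only the existence and finiteness of $L$, so it actually establishes the lemma in the generality in which it is stated; the price is the extra bookkeeping in bounding $\sup g$ and $\inf g$ through the remainder term, which you have handled correctly.
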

   \begin{proof}
   	 Let $f(x)$ be monotone in $[x, x+ \epsilon]$ then the result follows trivially.
   	 Since $\epsilon$ then is arbitrary there is an interval $[x, x + \epsilon^\prime]$, $ \epsilon^\prime \leq \epsilon$, such that $f(x)$ is monotone inside. 
   	 The backward case can be proved in an identical manner.
   	 This completes the proof. 
   \end{proof}
   \begin{theorem}[Bounds of oscillation theorem]
   	\label{th:hosc2}
   	Let $f(x) \in \holder{\alpha }$, $\alpha \in (0, 1]$ in the interval $[x, x +\epsilon]$.
   	Then 
   	\[
   	0 \leq c \; \epsilon^\alpha \leq \oscplus{f}{x}  \leq C \epsilon^\alpha 
   	\]
   	where $c$ and $C$ are positive real constants.
   \end{theorem}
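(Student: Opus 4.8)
The plan is to establish the two inequalities by separate arguments, since they are of genuinely different character: the right-hand estimate is an unconditional consequence of the H\"older hypothesis, whereas the left-hand estimate rests on the asymptotic identity supplied by Lemma~\ref{th:limosc} and therefore needs the forward velocity to be nonzero at $x$.

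For the upper bound I would first use that every $f \in \holder{\alpha}$ is continuous, so on the compact interval $[x, x+\epsilon]$ it attains its supremum and infimum, say at $z_{\max}$ and $z_{\min}$. Then $\oscplus{f}{x} = f(z_{\max}) - f(z_{\min}) = |f(z_{\max}) - f(z_{\min})|$, and since $|z_{\max} - z_{\min}| \leq \epsilon \leq \delta$ for $\epsilon$ below the H\"older threshold, the defining inequality yields $\oscplus{f}{x} \leq C\,|z_{\max} - z_{\min}|^\alpha \leq C\,\epsilon^\alpha$. This settles the right-hand inequality, with the admissible upper constant equal to the H\"older constant $C$.

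For the lower bound I would invoke Lemma~\ref{th:limosc} with $\beta = \alpha$, which gives
\[
\llim{\epsilon}{0}{ } \frac{ \oscplus{f}{x} }{ \epsilon^\alpha } = \left| \fdiffplus{f}{x}{\alpha} \right| \epnt
\]
Assuming this velocity is finite and nonzero, and writing its modulus as $L > 0$, the very definition of the limit furnishes a threshold $\delta'$ such that $\oscplus{f}{x}/\epsilon^\alpha \geq L/2$ whenever $0 < \epsilon < \delta'$, so that $c := L/2$ serves as a positive lower constant. I would then take the final admissible range to be $0 < \epsilon < \min\{\delta', \delta\}$, so that both bounds hold simultaneously with the same $\epsilon$.

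The main obstacle is the positivity of $c$. It cannot be derived from the one-sided H\"older inequality alone: a constant function lies in $\holder{\alpha}$ for every $\alpha$ yet has vanishing oscillation, so a strictly positive $c$ forces the function to realise the exponent $\alpha$ at $x$. That realisation is exactly the condition $\fdiffplus{f}{x}{\alpha} \neq 0$, which I would make explicit as the hypothesis carrying the left-hand inequality; the upper estimate, by contrast, requires no such assumption and holds for all $f \in \holder{\alpha}$.
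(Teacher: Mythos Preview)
Your argument is correct and in several respects sharper than the paper's. For the upper bound the paper splits into cases: when $f$ is monotone on $[x,x+\epsilon]$ it invokes Corollary~\ref{corr:mono1} to identify $\oscplus{f}{x}$ with $|f(x+\epsilon)-f(x)|$ and then applies the H\"older estimate; in the non-monotone case it partitions the interval into monotone subintervals and bounds each piece separately. Your route via the attained extrema $z_{\max},z_{\min}$ avoids this case distinction entirely and gives the bound in one stroke.

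For the lower bound the approaches diverge more substantially. The paper does not call on Lemma~\ref{th:limosc}; instead it notes that $\oscplus{f}{x}\geq B$ for some non-negative $B$, divides through by $\epsilon^\alpha$, and then asserts that $B$ must equal $c\,\epsilon^\alpha$ ``in order to keep $\oscplus{f}{x}/\epsilon^\alpha$ finite.'' This reasoning does not actually secure a \emph{strictly positive} $c$, and your counterexample of a constant function (which lies in every $\holder{\alpha}$ yet has identically zero oscillation) shows that no such $c$ can exist without a further hypothesis. Your use of Lemma~\ref{th:limosc} together with the explicit assumption $\fdiffplus{f}{x}{\alpha}\neq 0$ repairs this: it makes visible the condition the theorem tacitly needs, and delivers a genuine positive constant $c=L/2$ on a definite range of $\epsilon$. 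The trade-off is that your statement is conditional where the paper's is (over-)unconditional, but that is the correct trade.
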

   \begin{proof} 
   	There are two cases to be considered.
   	\begin{description}
   		\item[Case 1] Let $f(x)$ be monotone. 
   		
	   		Then from the definition of H\"older functions 
	   		and Theorem \ref{th:hosc} it follows that
	   		\[ 
		   	\oscplus{f}{x}\leq C \epsilon^\alpha \epnt
	   		\]
	   		On the other hand, 
	   		$ 		\oscplus{f}{x} \geq B   		$, where $B$ is a non-negative constant.
	   		Dividing all inequalities by the positive amount $\epsilon^\alpha$ gives
	   		\[ 
	   		\frac{B}{\epsilon^\alpha} \leq \frac{ \oscplus{f}{x} }{\epsilon^\alpha} 
	   		\leq C  \epnt
	   		\]
	   		Since $\epsilon$ can be made arbitrarily small then $B= c \; \epsilon^\alpha$ in order to keep
	   		$\frac{ \mathrm{osc}_{\epsilon} [f] }{\epsilon^\alpha}$ finite.
   		\item[Case 2] Let $f(x)$ be non-monotone. 
   		
   			Then if we can split the interval in a collection of touching intervals where $f(x)$ is monotone.
   			\[
   			[x, x+\epsilon] \backslash \{x+\epsilon\}= \bigcup\limits_{x_1=x}^{x_n=x+\epsilon} [x_i, x_{i+1})  \epnt
   			\]
   			Then, the same reasoning applies for every such interval and
   			\[ 
   			\mathrm{osc}_{[x_i, x_{i+1}]} [f] \leq C_i \Delta x_i^\alpha \leq C_i \epsilon^\alpha \epnt
   			\]
   			Therefore, 
   			\[ 
   			\oscplus{f}{x}  \leq \max{(C_i)} \; \epsilon^\alpha \epnt
   			\]
   			Then the lower bound can be established by a similar reasoning as in the previous case.
   			Then
   			\[ 
   			\mathrm{osc}_{[x_i, x_{i+1}]} [f] \geq c_i \Delta x_i^\alpha \geq k_i \epsilon^\alpha 
   			\]
   			where $ k_i \leq \frac{c_i x_i^\alpha}{\epsilon^\alpha}$.
   			Therefore, 
   			$ \oscplus{f}{x} \geq \min{(k_i)} \; \epsilon^\alpha $.
   	\end{description}
   \end{proof}
   \begin{proposition}
   	\label{prop:frvarlim1}
   	Let $f(x ) \in \fclass{H}{\alpha}$ in $[x, x+\epsilon]$ then
   	\fdiffpm{f}{x}{\beta} is unbounded if $\beta> \alpha$, bounded if $\beta = \alpha$ and 0 if $\beta < \alpha$.
   \end{proposition}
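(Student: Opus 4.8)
The plan is to reduce the entire statement to the asymptotic behaviour of the oscillatory quotient $\oscplus{f}{x}/\epsilon^\beta$ and to feed it through the two structural results already in hand, namely the Bounds of oscillation theorem (Theorem \ref{th:hosc2}) and the Limit of the oscillatory quotient lemma (Lemma \ref{th:limosc}). First I would record, for $f \in \holder{\alpha}$ on $[x, x+\epsilon]$, the two-sided estimate
\[
c\,\epsilon^\alpha \leq \oscplus{f}{x} \leq C\,\epsilon^\alpha
\]
furnished directly by Theorem \ref{th:hosc2} with positive constants $c, C$. Dividing through by the strictly positive quantity $\epsilon^\beta$ yields the sandwich
\[
c\,\epsilon^{\alpha-\beta} \leq \frac{\oscplus{f}{x}}{\epsilon^\beta} \leq C\,\epsilon^{\alpha-\beta} \epnt
\]

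Next I would pass to the limit $\epsilon \to 0$ and identify the central term by Lemma \ref{th:limosc}, which gives $\llim{\epsilon}{0}{}\, \oscplus{f}{x}/\epsilon^\beta = \left| \fdiffplus{f}{x}{\beta} \right|$. The three regimes are then read off from the sign of the exponent $\alpha - \beta$. If $\beta < \alpha$ the exponent is positive, so the upper bound $C\,\epsilon^{\alpha-\beta} \to 0$ squeezes $\left| \fdiffplus{f}{x}{\beta} \right| = 0$, whence $\fdiffplus{f}{x}{\beta} = 0$. If $\beta = \alpha$ the exponent vanishes and the sandwich collapses to the $\epsilon$-independent bound $c \leq \left| \fdiffplus{f}{x}{\beta} \right| \leq C$, so the velocity is finite. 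If $\beta > \alpha$ the exponent is negative, the lower bound $c\,\epsilon^{\alpha-\beta}$ diverges, and $\left| \fdiffplus{f}{x}{\beta} \right| = +\infty$, i.e. the velocity is unbounded. The backward statements follow verbatim by replacing $\oscplus{f}{x}$ with $\oscmin{f}{x}$ and invoking the backward halves of Theorem \ref{th:hosc2} and Lemma \ref{th:limosc}, so the $\pm$ versions are disposed of simultaneously.

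The one point that needs care is interpretive rather than computational: because both referenced results control only a non-negative quantity, the estimates genuinely bound $\left| \fdiffpm{f}{x}{\beta} \right|$ rather than the signed or complex-valued velocity itself. The zero case is unambiguous, since a vanishing modulus forces a vanishing velocity; in the equality case \emph{bounded} is to be read as finiteness of the modulus, existence of the limit already being granted by Lemma \ref{th:limosc}. I expect the genuinely delicate step to be ensuring that Theorem \ref{th:hosc2} may be applied with constants $c, C$ taken uniformly over all sufficiently small subintervals as $\epsilon \to 0$ — the upper constant is just the H\"older constant and poses no difficulty, but the positivity of the lower constant $c$ quietly presupposes that $\alpha$ is the sharp H\"older exponent of $f$, without which the divergence in the case $\beta > \alpha$ could fail. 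Once this uniformity is secured, the limit analysis is routine.
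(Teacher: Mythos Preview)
Your proposal is correct and follows essentially the same route as the paper: both rely on Theorem \ref{th:hosc2} for the lower oscillation bound and on Lemma \ref{th:limosc} to identify the limiting oscillatory quotient with $\left|\fdiffplus{f}{x}{\beta}\right|$, then read off the three regimes from the sign of $\alpha-\beta$. The only cosmetic difference is that the paper obtains the upper bound directly from the H\"older inequality $\left|\fracvarplus{f}{x}{\beta}\right| \leq C\,\epsilon^{\alpha-\beta}$ rather than via the oscillation, and your caveat about the sharpness of $\alpha$ needed for $c>0$ is a point the paper leaves implicit.
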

   \begin{proof}
   	$
   	\left| \fracvarplus{f}{x}{\beta} \right|  \leq C \; \epsilon^{\alpha -\beta}$. 
   	Therefore, the vanishing and the bounded cases follow.
   	According to Theorem \ref{th:hosc2}
   	\[
   	  \frac{ \oscplus{f}{x} }{\epsilon^{\beta}} \geq c \, \epsilon^{\alpha -\beta } \epnt
   	\] 
   	Therefore, if $ \beta> \alpha$ the limit is unbounded. However by Lemma \ref{th:limosc}
   	\[
   	   	\left|  \fdiffplus{f}{x}{\beta}  \right| = \llim{\epsilon}{0}{}  \frac{ \oscplus{f}{x} }{\epsilon^{\beta}}   \ecma
   	\]
   	therefore, the fractional velocity is unbounded.
   \end{proof}
 The backward case is proven in a similar manner.

 \section{Fractional co-variation of functions}
 \label{sec:cofrdiff}
 The product of two fractional variation operators will be designated as a new operator because of its utility in the expansion of  products of  $\alpha$-velocities.  
 \begin{definition}
 	\label{def:fvar}
 	Let the  co-variation operators in the interval $[x, x+\epsilon]$ be defined as
 	\begin{equation}
 	\label{eq:fracvar}
 	[f,g]_\beta^{\epsilon \pm}(x) :=     \fracvar{f}{x}{\beta/2}{\epsilon \pm} \, \fracvar {g}{x}{\beta/2}{\epsilon \pm} \ecma \\
 	\end{equation}
 	where  $\epsilon >0$ and $0 < \beta \leq 1 $ are real parameters and $f(x)$ and $g(x)$ are functions.
 When $\beta=1$ the index will be omitted to keep notation simple.
 	The symbol  $ \left[ \cdot , \cdot \right]^{+}_{\beta}$ will be called \textsl{forward fractional co-variation} operator while $ \left[ \cdot , \cdot \right]^{-}_{\beta}$ will be called \textsl{ backward fractional co-variation} operator.	
 	
 \end{definition}
 \begin{remark}
 	I can propose the following naming convention. 
 	The order $\frac{1}{2}$ fractional co-variation to be named   \textsl{quadratic variation},
 	the  order $\frac{1}{3}$ -  \textsl{ternary variation},
 	the  order $\frac{1}{4}$ -  \textsl{quaternary variation}, etc.
 \end{remark}

   \begin{definition}
   	\label{def:fvar11}
   	We define the  \textit{fractional co-variation} of two functions $f(x)$ and $g(x)$ about a point $x$ as the limit 
   	\begin{equation}
   	\label{eq:fracvar2}
   	[f,g]_\beta^{ \pm}(x) := \llim{\epsilon}{0}{ }    \fracvar{f}{x}{\beta/2}{\epsilon \pm} \, \fracvar {g}{x}{\beta/2}{\epsilon \pm}  \
   	\end{equation}
   \end{definition}
 \begin{proposition}
 	The fractional co-variation is bi-linear. That is, without making discrimination about the direction of application for  the numbers $\lambda, \mu \in \fclass{R}{}$ and the functions $g(x), f(x), h(x)$ we have 
 	\begin{flalign*}
 	\left[ \lambda \, g, \mu \, f \right]_{\beta} (x) = \lambda \, \mu \, \left[    g,    f \right]_\beta (x) \ecma \\
 	\left[ g + f,  \, h \right]_\beta (x) = \left[ g  ,  \, h \right]_\beta (x)  +\left[ f,  \, h \right]_\beta (x) \epnt
 \end{flalign*}
 \end{proposition}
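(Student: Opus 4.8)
The plan is to reduce both identities to a single elementary fact --- that the fractional variation operator $\fracvar{\cdot}{x}{\beta/2}{\epsilon\pm}$ is linear in its functional argument --- and then to transport that linearity through the limit defining the co-variation in Definition \ref{def:fvar11}. Since a constant factors out of a limit and the limit of a sum splits into a sum of limits, nothing beyond the algebra of limits is required once the operator-level linearity is in hand.

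First I would record linearity at fixed $\epsilon$. By Definition \ref{def:fracvar}, or equivalently the difference-operator representation of Remark \ref{rem:diff2}, the numerator of $\fracvar{\cdot}{x}{\beta/2}{\epsilon\pm}$ is a forward or backward difference, which is linear in the function; division by the positive scalar $\epsilon^{\beta/2}$ preserves this, so
\begin{flalign*}
\fracvar{\lambda\, g}{x}{\beta/2}{\epsilon\pm} &= \lambda\,\fracvar{g}{x}{\beta/2}{\epsilon\pm} \ecma \\
\fracvar{g + f}{x}{\beta/2}{\epsilon\pm} &= \fracvar{g}{x}{\beta/2}{\epsilon\pm} + \fracvar{f}{x}{\beta/2}{\epsilon\pm} \epnt
\end{flalign*}
For the homogeneity identity I would insert the first relation into both factors of the product in \eqref{eq:fracvar2}, pull the constant $\lambda\mu$ outside the product and then outside the limit, reaching $\lambda\mu\,[g,f]_\beta(x)$. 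For additivity I would insert the second relation into the first slot of the product, distribute the common factor $\fracvar{h}{x}{\beta/2}{\epsilon\pm}$, and split the resulting limit of a sum into the two co-variations $[g,h]_\beta(x)$ and $[f,h]_\beta(x)$.

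The step that requires care --- and the main obstacle --- is precisely this last splitting: the limit of a sum equals the sum of limits only when the summand limits exist as finite quantities. I would therefore prove the additivity identity under the standing hypothesis that the two co-variations on its right-hand side exist, which is the intended reading of Definition \ref{def:fvar11}. A natural regime in which this holds is $f, g, h \in \fclass{H}{\beta/2}$, where by Proposition \ref{prop:frvarlim1} the order-$\beta/2$ velocities of the three functions are finite; there each factor limit exists and the split is legitimate. Without such a hypothesis the combined limit defining $[g+f,h]_\beta(x)$ may exist while the individual ones do not, so the identity is to be understood as an equality conditional on the existence of its right-hand side.
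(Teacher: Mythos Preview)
Your proposal is correct and follows the same approach as the paper, which simply records that both properties follow from the linearity of the fractional variation operators. Your version is in fact more careful than the paper's one-line proof, since you explicitly flag the existence hypothesis needed to split the limit of a sum in the additivity identity.
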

 \begin{proof}
 	Both properties follows from the linearity of the fractional variation operators. 
 \end{proof}
 
   \begin{proposition}
   	The fractional co-variation is symmetric about its arguments. That is for  functions $g(x), f(x)$  without making discrimination about the direction of application we have 
   	\[
   	\left[ f, g \right]_\beta (x) =\left[ g, f \right]_\beta (x) \epnt
   	\]
   \end{proposition}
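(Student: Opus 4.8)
The plan is to observe that this symmetry is a purely algebraic consequence of the commutativity of scalar multiplication, so the proof should be short. First I would unfold Definition \ref{def:fvar11} and note that for every fixed $\epsilon > 0$ and fixed base point $x$, each of the two factors $\fracvar{f}{x}{\beta/2}{\epsilon \pm}$ and $\fracvar{g}{x}{\beta/2}{\epsilon \pm}$ is an ordinary number (real, or in the half-power setting possibly complex, as the $\sqrt{x}$ example illustrates). Their product is thus a product of scalars, and scalar multiplication commutes.

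The key step is then simply to interchange the two factors in the pre-limit expression:
\[
\fracvar{f}{x}{\beta/2}{\epsilon \pm} \, \fracvar{g}{x}{\beta/2}{\epsilon \pm} = \fracvar{g}{x}{\beta/2}{\epsilon \pm} \, \fracvar{f}{x}{\beta/2}{\epsilon \pm} \ecma
\]
an identity valid for each $\epsilon > 0$. Taking the limit $\epsilon \to 0$ on both sides and invoking Definition \ref{def:fvar11} yields
\[
[f,g]_\beta^{\pm}(x) = [g,f]_\beta^{\pm}(x) \epnt
\]
Since the statement makes no discrimination about the direction of application, this is carried out identically for the forward ($+$) and backward ($-$) operators, and then for the symmetric limiting quantity $[f,g]_\beta(x)$.

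The main point worth flagging, rather than a genuine obstacle, is the handling of existence of the limits: the equality must be read so that whenever one of the two co-variations exists the other does as well. This is immediate here because the two pre-limit expressions are literally equal for every $\epsilon$, so the existence of $\llim{\epsilon}{0}{}$ for one side forces the existence of the limit for the other side with the same value. Consequently no separate convergence argument is required, and the conclusion follows at once from the bilinearity-type structure already used in the preceding proposition.
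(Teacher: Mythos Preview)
Your proposal is correct and follows exactly the paper's approach: the paper simply remarks that the property follows from the commutativity of multiplication, which is precisely what you spell out in more detail. Your additional comment on existence of the limits is a reasonable clarification but not needed beyond what the paper gives.
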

   The property follows from the commutativity of multiplication.
   \begin{proposition}
   	Let $f(x) \in \fclass{C}{0}$ and \textit{c} is constant. Then  without making discrimination about the direction of application
   	\[
   	\left[ f, c \right]_\beta (x) =0 \epnt
   	\]
   \end{proposition}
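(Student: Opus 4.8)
The plan is to reduce the statement to the elementary observation that the fractional variation of a constant vanishes identically for every $\epsilon$, so that one of the two factors in the defining product of Definition~\ref{def:fvar11} is exactly zero \emph{before} any limit is taken. First I would expand $\left[ f, c \right]_\beta^{\pm}(x)$ according to Definition~\ref{def:fvar11}, writing it as
\[
\left[ f, c \right]_\beta^{\pm}(x) = \llim{\epsilon}{0}{ } \fracvar{f}{x}{\beta/2}{\epsilon \pm} \, \fracvar{c}{x}{\beta/2}{\epsilon \pm} \epnt
\]

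Next I would evaluate the second factor. By Definition~\ref{def:deltas} the forward and backward differences of a constant vanish, $\deltaplus{c}{x} = c - c = 0$ and likewise for the backward operator, so that $\fracvar{c}{x}{\beta/2}{\epsilon \pm} = 0$ for every $\epsilon > 0$. The key remaining point, which I would state explicitly, is that the product is already zero for each fixed $\epsilon > 0$, not merely in the limit. Since $f \in \fclass{C}{0}$, the values $f(x)$ and $f(x \pm \epsilon)$ are finite for each fixed $\epsilon$, hence $\fracvar{f}{x}{\beta/2}{\epsilon \pm}$ is a finite number; multiplying a finite number by the identically vanishing factor gives exactly $0$. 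Therefore the expression under the limit is identically zero and its limit is $0$, which establishes the claim. The backward case is verbatim identical after replacing the forward difference by the backward one.

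The only subtlety worth flagging — and the single place where care is needed — is that one must avoid reading the expression as an indeterminate $0 \cdot \infty$ form: by Proposition~\ref{prop:frvarlim1} the $(\beta/2)$-variation of $f$ can itself be unbounded as $\epsilon \to 0$. The continuity hypothesis is precisely what rules this out at the level of each fixed $\epsilon$, since for every positive $\epsilon$ the $f$-factor is finite, so the product genuinely equals zero termwise and we never evaluate the $f$-factor at the limit. The hard part is thus not any estimate but correct bookkeeping of the order of operations: vanishing of the constant's variation is termwise, so it dominates regardless of the behaviour of the other factor.
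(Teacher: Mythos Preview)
Your proof is correct and follows the same idea as the paper, which justifies the proposition in a single line: ``The proposition is true since the difference operator yields zero from a constant.'' Your version is more carefully written, in particular your explicit observation that the product vanishes termwise (so no $0\cdot\infty$ indeterminacy arises) spells out what the paper leaves implicit.
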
 
   The proposition is true since the difference operator yields zero from a constant.
     \begin{figure}[ht]
     	\centering
     	\ifx\JPicScale\undefined\def\JPicScale{1}\fi
     	\unitlength \JPicScale mm
     	\begin{picture}(55,50.21)(0,0)
     	\linethickness{0.4mm}
     	\multiput(21.49,33.4)(0.12,-0.12){139}{\line(1,0){0.12}}
     	\linethickness{0.3mm}
     	\put(5,16.67){\line(1,0){50}}
     	\put(55,16.67){\vector(1,0){0.12}}
     	\linethickness{0.3mm}
     	\multiput(21.6,10)(0.07,40){1}{\line(0,1){40}}
     	\put(21.67,50){\vector(0,1){0.12}}
     	\linethickness{0.3mm}
     	\put(38.33,15.67){\line(0,1){1}}
     	\put(38.33,14.67){\makebox(0,0)[cc]{1.0}}
     	
     	\linethickness{0.3mm}
     	\put(20.67,33.33){\line(1,0){1}}
     	\put(19.67,33.33){\makebox(0,0)[cr]{1.0}}
     	
     	\linethickness{0.4mm}
     	\multiput(38.3,16.7)(1.42,-1.44){5}{\multiput(0,0)(0.12,-0.12){6}{\line(0,-1){0.12}}}
     	\linethickness{0.4mm}
     	\multiput(4.57,50.21)(1.46,-1.45){12}{\multiput(0,0)(0.12,-0.12){6}{\line(1,0){0.12}}}
     	\put(50.05,13.14){\makebox(0,0)[cc]{$\alpha_1$}}
     	
     	\put(16.6,47.34){\makebox(0,0)[cc]{$\alpha_2$}}
     	
     	\put(28.94,33.51){\makebox(0,0)[cc]{$\beta=1$}}
     	
     	\put(38.09,24.68){\makebox(0,0)[cc]{}}
     	
     	\put(25.64,20.11){\makebox(0,0)[cc]{$\infty$}}
     	
     	\put(38.83,33.19){\makebox(0,0)[cc]{0}}
     	
     	\put(39.04,22.87){\makebox(0,0)[cc]{$[f,g]_{\beta}$}}
     	
     	\put(13.62,47.34){\makebox(0,0)[cc]{}}
     	
     	\end{picture}
     	\caption{Fractional co-variation $[f,g]^{\epsilon \pm}(x)$ for 
     		$ f(x) \in \holder{\alpha_1}$ and $ g(x) \in \holder{\alpha_2}$ and $\beta=1$.}
     	\label{fig:qvar}
     \end{figure}
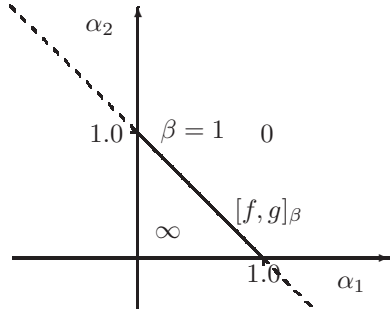
 \begin{theorem}[Bounds of co-variation]
 	\label{th:limitcovar}
 	Let $f(x) \in \holder{\alpha_1}$ and $g(x) \in \holder{\alpha_2}$.
 	Then \llim{\epsilon}{0}{[f,g]_\beta^{\epsilon \pm}(x)} does not vanish only if
 	$ \alpha_1 + \alpha_2 = \beta$.
 	\llim{\epsilon}{0}{[f,g]_\beta^{\epsilon \pm}(x)} is bounded only if 
 	$ \alpha_1 + \alpha_2 \geq \beta$.
 	The result is illustrated in Fig. \ref{fig:qvar}.
 \end{theorem}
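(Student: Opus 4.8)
The plan is to treat the forward co-variation as a single quotient,
\[
[f,g]_\beta^{\epsilon +}(x) = \frac{\Delta^{+}_{\epsilon}[f](x)\,\Delta^{+}_{\epsilon}[g](x)}{\epsilon^{\beta}} \ecma
\]
and to sandwich its magnitude between two multiples of $\epsilon^{\alpha_1+\alpha_2-\beta}$. The sign of the exponent $\alpha_1+\alpha_2-\beta$ as $\epsilon\to 0$ then decides vanishing, boundedness, or blow-up, reproducing the trichotomy drawn in Fig.~\ref{fig:qvar}.

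First I would establish the upper bound. Since $f\in\holder{\alpha_1}$ and $g\in\holder{\alpha_2}$, Definition~\ref{def:holder} gives $\left|\Delta^{+}_{\epsilon}[f](x)\right|\leq C_1\,\epsilon^{\alpha_1}$ and $\left|\Delta^{+}_{\epsilon}[g](x)\right|\leq C_2\,\epsilon^{\alpha_2}$ for $\epsilon$ small. Dividing the product by $\epsilon^{\beta}$ yields
\[
\left| [f,g]_\beta^{\epsilon +}(x) \right| \leq C_1 C_2\, \epsilon^{\alpha_1 + \alpha_2 - \beta} \epnt
\]
As $\epsilon\to 0$ the right-hand side tends to $0$ when $\alpha_1+\alpha_2>\beta$ and stays below $C_1C_2$ when $\alpha_1+\alpha_2=\beta$. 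This already shows the limit vanishes whenever $\alpha_1+\alpha_2>\beta$ and is bounded whenever $\alpha_1+\alpha_2\geq\beta$, which is the ``bounded'' direction of the second assertion and half of the first.

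Next I would produce a matching lower bound to force blow-up when $\alpha_1+\alpha_2<\beta$. By Theorem~\ref{th:hosc2} the oscillations satisfy $\oscplus{f}{x}\geq c_1\,\epsilon^{\alpha_1}$ and $\oscplus{g}{x}\geq c_2\,\epsilon^{\alpha_2}$ with $c_1,c_2>0$, while Lemma~\ref{th:limosc} identifies $\llim{\epsilon}{0}{\oscplus{f}{x}/\epsilon^{\beta/2}}$ with $\left|\fdiffplus{f}{x}{\beta/2}\right|$. The delicate point, and the main obstacle, is that the co-variation is built from the \emph{signed} increments $\Delta^{+}_{\epsilon}$, whereas a signed increment may be far smaller than the oscillation on an interval where a factor turns; so the oscillation lower bound does not transfer to the increment for a fixed $\epsilon$. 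I would resolve this exactly as in the proof of Theorem~\ref{th:hosc2}: because $\epsilon$ is arbitrary, pass to a nested subinterval $[x,x+\epsilon^{\prime}]$, $\epsilon^{\prime}\leq\epsilon$, on which both factors are monotone, so that by Corollary~\ref{corr:mono1} the increments recover the oscillations, $\left|\Delta^{+}_{\epsilon^{\prime}}[f](x)\right| = \mathrm{osc}_{[x,x+\epsilon^{\prime}]}[f]$ and likewise for $g$. Along such a sequence $\epsilon^{\prime}\to 0$ the product is bounded below by $c_1 c_2\,(\epsilon^{\prime})^{\alpha_1+\alpha_2-\beta}$, which diverges precisely when $\alpha_1+\alpha_2<\beta$; hence the limit is unbounded there, giving the contrapositive of ``bounded only if $\alpha_1+\alpha_2\geq\beta$''.

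Finally I would assemble the trichotomy: the limit vanishes for $\alpha_1+\alpha_2>\beta$ by the upper bound and is unbounded for $\alpha_1+\alpha_2<\beta$ by the lower bound, so a finite nonzero limit can survive only on the critical line $\alpha_1+\alpha_2=\beta$, which is the first assertion; boundedness is thereby confined to $\alpha_1+\alpha_2\geq\beta$, which is the second. The backward co-variation is handled identically, replacing $\Delta^{+}_{\epsilon}$ by $\Delta^{-}_{\epsilon}$ and the forward oscillations and velocities by their backward counterparts, so the two cases need not be separated.
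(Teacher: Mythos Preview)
Your proof is correct and follows essentially the same sandwich strategy as the paper: bound $\left|[f,g]_\beta^{\epsilon\pm}(x)\right|$ above and below by multiples of $\epsilon^{\alpha_1+\alpha_2-\beta}$, invoking Theorem~\ref{th:hosc2} for the lower estimate, and then read off the trichotomy from the sign of the exponent. The only cosmetic difference is that the paper treats $\Delta^{\pm}_\epsilon f\cdot\Delta^{\pm}_\epsilon g$ as a single object in $\holder{\alpha_1+\alpha_2}$ and applies Theorem~\ref{th:hosc2} once, whereas you bound each factor separately and are more explicit about the passage to a monotone subinterval.
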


 \begin{proof}
 	Since the product of two H\"older functions is a H\"older function of the sum of the grades we have
 	$ \Delta f(x) \Delta g(x) \in \fclass{H}{\alpha_1+ \alpha_2}$.
 	Therefore, by Theorem \ref{th:hosc2} it follows that
 	\[
 	C_1^\prime C_2^\prime \epsilon^{\alpha_1 + \alpha_2} \leq \Delta^{+}_{\epsilon} f(x) \Delta^{+}_{\epsilon}  g(x) 
 	\leq  C_1 C_2 \epsilon^{\alpha_1 + \alpha_2} 
 	\]
 	for certain constants $ C_1^\prime, C_2^\prime ,  C_1,  C_2 $.
 	Therefore, 
 	\[
 	C_1^\prime C_2^\prime \epsilon^{\alpha_1 + \alpha_2 - \beta} \leq  [f,g]^{\epsilon+}_{\beta}
 	\leq  C_1 C_2 \epsilon^{\alpha_1 + \alpha_2 -\beta} \epnt
 	\]
 	Taking the limit provides claimed result.
 	The backward case follows from identical reasoning.
 \end{proof}

   \begin{corollary}
   Let $f(x)$ is uniformly $ \, \fclass{C}{1}$ in the interval $ [x, x+ \epsilon]$ and $g(x) \in \holder{\alpha}, \alpha \leq 1$. 
   Then  without making discrimination about the direction of application
   \[
   [ f, g]_\beta = 0 \epnt
   \]
   \end{corollary}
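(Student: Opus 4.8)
The plan is to reduce this to the Bounds of co-variation theorem (Theorem \ref{th:limitcovar}). First I would observe that a uniformly $\fclass{C}{1}$ function on $[x, x+\epsilon]$ is Lipschitz there: since $f^{\prime}$ is continuous it is bounded, say by a constant $M$, on a fixed neighbourhood of $x$, and the mean value theorem then gives $|f(x+\epsilon) - f(x)| \leq M \epsilon$. Hence $f$ belongs to the H\"older class $\holder{1}$, so we may take $\alpha_1 = 1$ and $\alpha_2 = \alpha$ in Theorem \ref{th:limitcovar}.

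Next I would simply add the grades: $\alpha_1 + \alpha_2 = 1 + \alpha$. Since $\alpha > 0$ by hypothesis and the co-variation order satisfies $\beta \leq 1$ by Definition \ref{def:fvar}, we have $1 + \alpha > 1 \geq \beta$, so in particular $\alpha_1 + \alpha_2 \neq \beta$. By the non-vanishing criterion of Theorem \ref{th:limitcovar}, which asserts that $\llim{\epsilon}{0}{[f,g]_\beta^{\epsilon \pm}(x)}$ fails to vanish only when $\alpha_1 + \alpha_2 = \beta$, the limit therefore vanishes. This is exactly the claim, and since the argument never refers to the sign of the difference operator it applies verbatim to the backward co-variation, so no discrimination about the direction of application is required.

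Alternatively, and essentially as a specialization of the estimate inside the proof of Theorem \ref{th:limitcovar}, I would bound the quotient directly. Writing out Definition \ref{def:fvar} gives $[f,g]_\beta^{\epsilon+}(x) = \fracvar{f}{x}{\beta/2}{\epsilon+}\,\fracvar{g}{x}{\beta/2}{\epsilon+}$, and combining the Lipschitz bound on $f$ with the H\"older bound $|g(x+\epsilon) - g(x)| \leq C \epsilon^{\alpha}$ yields $\left| [f,g]_\beta^{\epsilon+}(x) \right| \leq M C \, \epsilon^{1 + \alpha - \beta}$. Because $1 + \alpha - \beta \geq \alpha > 0$, the right-hand side tends to $0$ as $\epsilon \to 0$, which again gives the result.

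The only real subtlety, and the step I would be most careful about, is the uniformity of the Lipschitz bound: since the interval $[x, x+\epsilon]$ collapses to the point $x$, I need $f^{\prime}$ bounded on a \emph{fixed} neighbourhood of $x$ rather than merely finite at $x$. This is precisely what the hypothesis ``uniformly $\fclass{C}{1}$'' supplies, and it is the one place where dropping the uniformity could cause the estimate to fail.
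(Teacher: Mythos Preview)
Your proposal is correct and follows essentially the same route as the paper: identify $f\in\fclass{C}{1}$ with $\alpha_1=1$, add the grades to get $1+\alpha$, and invoke Theorem~\ref{th:limitcovar} to conclude vanishing since $1+\alpha>\beta$. Your version is simply more explicit than the paper's---you spell out the Lipschitz step and the strict inequality, and supplement with a direct bound---but the underlying argument is identical.
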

   \begin{proof}
   	By the last theorem for the exponents we have $1+ \alpha \geq 1$. 
   	This is satisfied for $ \alpha \geq 0$, therefore $ [ f, g]_\beta$ vanishes.
   \end{proof}
 Computation of the fractional co-variation can be related to ordinary derivatives in the following way:
 \begin{theorem}[Limiting case of Fractional co-variation]
 	\label{th:frcovarlim}
 Let $f(x),g(x) \in \fclass{C}{1}$. Then 
 	\begin{flalign*}
 	  \left[f,g \right]^{+}_{\beta} = \frac{1}{\beta} \llim{\epsilon}{0}{} \epsilon^{1-\beta} f^{\prime} (x+ \epsilon) \Delta_{\epsilon}^{+}[g] (x) + \frac{1}{\beta} \llim{\epsilon}{0}{} \epsilon^{1-\beta} g^{\prime} (x+ \epsilon) \Delta_{\epsilon}^{+}[f] (x) \ecma \\
 	  \left[f,g \right]^{-}_{\beta} = \frac{1}{\beta}	\llim{\epsilon}{0}{} \epsilon^{1-\beta} f^{\prime} (x- \epsilon) \Delta_{\epsilon}^{-}[g] (x) + \frac{1}{\beta}	\llim{\epsilon}{0}{} \epsilon^{1-\beta} g^{\prime} (x- \epsilon) \Delta_{\epsilon}^{-}[f] (x) \epnt
 	\end{flalign*}
 \end{theorem}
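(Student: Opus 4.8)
The plan is to recognize the defining limit of the co-variation as an indeterminate form of type $0/0$ in the increment $\epsilon$ and to resolve it by L'Hôpital's rule, differentiating with respect to $\epsilon$. Using the equivalent representation of Remark~\ref{rem:diff2} together with Definition~\ref{def:fvar11}, I would first write the forward co-variation in the product form
\[
[f,g]^{+}_{\beta}(x) = \lim_{\epsilon\to0}\frac{\Delta^{+}_{\epsilon}[f](x)\,\Delta^{+}_{\epsilon}[g](x)}{\epsilon^{\beta}}.
\]
Since $f,g\in\mathbb{C}^{1}$, both forward differences vanish as $\epsilon\to0$, and because $0<\beta\le1$ the denominator $\epsilon^{\beta}$ also vanishes, so the quotient is genuinely of the form $0/0$ and the rule is applicable in principle.

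Next I would differentiate numerator and denominator in $\epsilon$. The numerator $N(\epsilon)=[f(x+\epsilon)-f(x)]\,[g(x+\epsilon)-g(x)]$ is differentiable because $f,g$ are $\mathbb{C}^{1}$, and the ordinary product rule gives
\[
N'(\epsilon) = f'(x+\epsilon)\,\Delta^{+}_{\epsilon}[g](x) + g'(x+\epsilon)\,\Delta^{+}_{\epsilon}[f](x),
\]
while the denominator yields $D'(\epsilon)=\beta\,\epsilon^{\beta-1}$. Substituting and factoring out $\epsilon^{1-\beta}$ produces the symmetric precursor
\[
[f,g]^{+}_{\beta}(x) = \frac{1}{\beta}\lim_{\epsilon\to0}\epsilon^{1-\beta}\Bigl[f'(x+\epsilon)\,\Delta^{+}_{\epsilon}[g](x)+g'(x+\epsilon)\,\Delta^{+}_{\epsilon}[f](x)\Bigr],
\]
and splitting the bracket into the two stated limits completes the forward case. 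The backward case follows identically, using $\tfrac{d}{d\epsilon}[f(x)-f(x-\epsilon)]=f'(x-\epsilon)$, so that the same computation reproduces the expressions in $f'(x-\epsilon)$, $g'(x-\epsilon)$ and the backward differences.

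The step requiring the most care is the invocation of L'Hôpital's rule together with the subsequent decomposition of a single limit into a sum of two. The rule is legitimate only once the limit of the ratio $N'(\epsilon)/D'(\epsilon)$ is known to exist, so I would verify existence directly rather than assume it: since $f',g'$ are continuous, hence bounded near $x$, while $\Delta^{\pm}_{\epsilon}[f],\Delta^{\pm}_{\epsilon}[g]\to0$ and $\epsilon^{1-\beta}$ stays bounded for $\beta\le1$, each of the two terms converges (in fact to $0$, consistently with the vanishing predicted by Theorem~\ref{th:limitcovar}, where $\alpha_{1}+\alpha_{2}=2>\beta$). The simultaneous convergence of both terms is precisely what justifies both the application of the rule and the passage from the limit of a sum to the sum of limits, so no delicate cancellation between the two pieces has to be controlled.
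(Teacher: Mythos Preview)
Your argument is correct and follows the same route as the paper: both write the co-variation as a $0/0$ quotient in $\epsilon$ and apply L'H\^opital's rule with respect to $\epsilon$, the only cosmetic difference being that the paper first multiplies out the product $(f(x+\epsilon)-f(x))(g(x+\epsilon)-g(x))$ into four terms before differentiating, whereas you apply the product rule directly to the factored numerator. Your final paragraph, which checks that each summand actually converges so that L'H\^opital is applicable and the limit of the sum may be split, is a point the paper passes over, so your version is in fact slightly more complete.
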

 
 \begin{proof}
 	The proof follows from application of l'H\^opital's rule to the definition of fractional co-variation.
 	For the forward case we have:
	\begin{flalign*} 
      \left[f,g \right]^{+}_{\beta}   &=   \llim{\epsilon}{0}{} \frac{\left( f (x+ \epsilon) - f(x)  \right)  \left( g (x+ \epsilon) - g(x)   \right) }{\epsilon^\beta}      \\
  & =	 \llim{\epsilon}{0}{} \frac{  f (x+ \epsilon) g (x+ \epsilon)  - f(x) g (x+ \epsilon)    - g(x)  f (x+ \epsilon)  + g(x) f(x)     }{\epsilon^\beta}     
  	\end{flalign*}
  	In the last expression we treat $\epsilon$ as an independent variable and $x$ as a parameter.
  	Application of  l'H\^opital's yields:
  	\begin{flalign*} 
	  \left[f,g \right]^{+}_{\beta}   &	= \llim{\epsilon}{0}{} \frac{\epsilon^{1-\beta}  }{\beta}  \left( f (x+ \epsilon) g (x+ \epsilon)  - f(x) g (x+ \epsilon)    - g(x)  f (x+ \epsilon)  + g(x) f(x)    \right)_{\epsilon}^\prime   \\
	  & = \llim{\epsilon}{0}{} \frac{\epsilon^{1-\beta}  }{\beta} \left(    f^\prime (x+ \epsilon) g (x+ \epsilon) + f (x+ \epsilon) g^\prime (x+ \epsilon) - f(x) g^\prime (x+ \epsilon)    - g(x)  f^\prime (x+ \epsilon)    \right) \\
	  & = \frac{1}{\beta}	\llim{\epsilon}{0}{} \epsilon^{1-\beta} f^{\prime} (x+ \epsilon) \Delta_{\epsilon}^{+}[g] (x) + \frac{1}{\beta}	\llim{\epsilon}{0}{} \epsilon^{1-\beta} g^{\prime} (x+ \epsilon) \Delta_{\epsilon}^{+}[f] (x) \epnt
	\end{flalign*}
	In a similar manner, the backward co-variation can be computed as: 
	\begin{flalign*} 
	\left[f,g \right]^{-}_{\beta}   &=   \llim{\epsilon}{0}{} \frac{\left( f (x- \epsilon) - f(x)  \right)  \left( g (x- \epsilon) - g(x)   \right) }{\epsilon^\beta}      \\
	& =	 \llim{\epsilon}{0}{} \frac{  f (x- \epsilon) g (x- \epsilon)  - f(x) g (x- \epsilon)    - g(x)  f (x- \epsilon)  + g(x) f(x)     }{\epsilon^\beta} 
	\end{flalign*}
	  	In the last expression we treat $\epsilon$ as an independent variable and $x$ as a parameter.
	  	Application of  l'H\^opital's yields:
	\begin{flalign*} 
	\left[f,g \right]^{-}_{\beta}   &	= \llim{\epsilon}{0}{} \frac{\epsilon^{1-\beta}  }{\beta}  \left( f (x - \epsilon) g (x - \epsilon)  - f(x) g (x - \epsilon)    - g(x)  f (x - \epsilon)  + g(x) f(x)    \right)^\prime   \\
	& = \llim{\epsilon}{0}{} \frac{\epsilon^{1-\beta}  }{\beta}  \left( -    f^\prime (x - \epsilon) g (x - \epsilon) - f (x - \epsilon) g^\prime (x - \epsilon) + f(x) g^\prime (x - \epsilon)    + g(x)  f^\prime (x - \epsilon)    \right) \\
	& =  \frac{1}{\beta}	\llim{\epsilon}{0}{} \epsilon^{1-\beta} f^{\prime} (x - \epsilon) \Delta_{\epsilon}^{-}[g] (x) + \frac{1}{\beta}	\llim{\epsilon}{0}{} \epsilon^{1-\beta} g^{\prime} (x - \epsilon) \Delta_{\epsilon}^{-}[f] (x) \epnt
	\end{flalign*}
 \end{proof}
  There is also another reformulation possible.
  \begin{corollary}
   Let $f(x), \ g(x) \in \fclass{C}{1}$ in the interval $[x, x+ \epsilon]$. Then 
   \begin{flalign*}
   \left[f,g \right]^{+}_{\beta} = \frac{1}{\beta} \llim{\epsilon}{0}{} \epsilon \;  f^{\prime} (x+ \epsilon) \, \fracvarplus{g}{x}{\beta}  + \frac{1}{\beta} \llim{\epsilon}{0}{} \epsilon \; g^{\prime} (x+ \epsilon) \, \fracvarplus{f}{x}{\beta}  \ecma \\
   \left[f,g \right]^{-}_{\beta} = \frac{1}{\beta}	\llim{\epsilon}{0}{} \epsilon \; f^{\prime} (x- \epsilon) \, \fracvarmin{g}{x}{\beta}   + \frac{1}{\beta}	\llim{\epsilon}{0}{} \epsilon \;  g^{\prime} (x- \epsilon) \, \fracvarmin{f}{x}{\beta} \epnt
   \end{flalign*}
  \end{corollary}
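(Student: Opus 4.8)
The plan is to obtain the corollary as a purely algebraic regrouping of the two limits already furnished by Theorem \ref{th:frcovarlim}; no new analytic input is needed, since the standing hypothesis $f,g \in \fclass{C}{1}$ is identical. I would begin from the forward identity of that theorem,
\[
\left[f,g \right]^{+}_{\beta} = \frac{1}{\beta} \llim{\epsilon}{0}{} \epsilon^{1-\beta} f^{\prime} (x+ \epsilon) \Delta_{\epsilon}^{+}[g] (x) + \frac{1}{\beta} \llim{\epsilon}{0}{} \epsilon^{1-\beta} g^{\prime} (x+ \epsilon) \Delta_{\epsilon}^{+}[f] (x) \epnt
\]

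The key step is the factorization $\epsilon^{1-\beta} = \epsilon \cdot \epsilon^{-\beta}$, valid for every $\epsilon > 0$, combined with the observation from Remark \ref{rem:diff2} (equivalently Definition \ref{def:fracvar}) that
\[
\epsilon^{-\beta}\, \Delta^{+}_{\epsilon} [g] (x) = \frac{\Delta^{+}_{\epsilon} [g] (x)}{\epsilon^\beta} = \fracvarplus{g}{x}{\beta} \epnt
\]
Substituting this relabeling termwise carries each occurrence of $\epsilon^{1-\beta}\,\Delta^{+}_{\epsilon}[g](x)$ into $\epsilon\,\fracvarplus{g}{x}{\beta}$, and likewise for $f$, which reproduces the asserted forward formula verbatim. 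Because the replacement is an identity of the expression under the limit for each fixed $\epsilon$, it passes through $\llim{\epsilon}{0}{}$ with no further justification.

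Finally, the backward case follows by the same manipulation applied to the backward identity of Theorem \ref{th:frcovarlim}, replacing $\Delta^{+}_{\epsilon}$, $f^{\prime}(x+\epsilon)$, and $\fracvarplus{g}{x}{\beta}$ by their backward counterparts $\Delta^{-}_{\epsilon}$, $f^{\prime}(x-\epsilon)$, and $\fracvarmin{g}{x}{\beta}$. I do not anticipate any genuine obstacle: the single point warranting care is that the regrouping must be performed inside each limit rather than after evaluation, so that the equality is established as a reformulation of the limitand and not merely as coincidence of the two numerical values.
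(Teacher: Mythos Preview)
Your proposal is correct and matches the paper's approach: the paper presents this corollary as ``another reformulation'' of Theorem \ref{th:frcovarlim} with no separate proof, and your algebraic substitution $\epsilon^{1-\beta}\Delta^{\pm}_{\epsilon}[\cdot](x) = \epsilon\,\fracvar{\cdot}{x}{\beta}{\epsilon\pm}$ inside each limit is exactly the intended one-line justification.
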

  \begin{proposition}
  	\label{corr:qvar}
  	For a function $f(x)\in \fclass{C}{1} $ in the interval $[x, x+ \epsilon]$ we have
  	\begin{flalign*}
  	\left[f,f \right]^{+}_{\beta} & =  \frac{2}{\beta}  \llim{\epsilon}{0}{} \epsilon \; f^{\prime} (x+ \epsilon) \, \fracvarplus{f}{x}{\beta} \ecma \\
  	\left[f,f \right]^{-}_{\beta} & = \frac{2}{\beta}	\llim{\epsilon}{0}{} \epsilon \;  f^{\prime} (x- \epsilon) \, \fracvarmin{f}{x}{\beta} \epnt
  	\end{flalign*}	
  	
  \end{proposition}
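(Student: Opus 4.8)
The plan is to obtain the statement as an immediate specialization of the Corollary that directly precedes it, which expresses $\left[f,g \right]^{\pm}_{\beta}$ for $f,g \in \fclass{C}{1}$ as a sum of two limits built from $\epsilon\, f^{\prime}(x\pm\epsilon)$ and $\epsilon\, g^{\prime}(x\pm\epsilon)$ multiplied by the respective forward or backward fractional variations of the other factor. Since the hypothesis here is exactly that $f$ is $\fclass{C}{1}$ on $[x, x+\epsilon]$, I may invoke that Corollary directly with $g$ set equal to $f$.

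Specializing the forward identity to $g=f$ collapses its two summands into a single repeated expression, $\frac{1}{\beta}\llim{\epsilon}{0}{}\epsilon\, f^{\prime}(x+\epsilon)\,\fracvarplus{f}{x}{\beta}$. As each of the two limits in the Corollary exists on its own under the $\fclass{C}{1}$ assumption, their sum equals twice this common value, producing the factor $2/\beta$ claimed in the proposition. The backward identity is obtained identically from the backward formula of the Corollary, replacing $x+\epsilon$ by $x-\epsilon$ and $\fracvarplus{f}{x}{\beta}$ by $\fracvarmin{f}{x}{\beta}$.

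The only step deserving explicit comment is the passage from the limit of a sum to the sum of limits. This is legitimate because each of the two limits appearing in the Corollary exists separately, a consequence of the continuity of $f^{\prime}$ together with the existence of the fractional variation quotient, so the elementary arithmetic of limits applies termwise and no further estimate is needed. I therefore anticipate no genuine obstacle: the result is purely a bookkeeping reduction of the antecedent Corollary to its diagonal case $g=f$.
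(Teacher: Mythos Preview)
Your proposal is correct and matches the paper's intended approach: the paper states this proposition immediately after the Corollary without giving a proof, treating it as the obvious specialization $g=f$. Your remark about the legitimacy of summing the two limits is a reasonable clarification, but otherwise there is nothing to add.
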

 
  \begin{example}[Computation of fractional co-variation by Theorem \ref{th:frcovarlim}]
  	
  	We will compute the $\alpha$-velocity of the functions $f(x)=\sqrt{x}$ and $g(x)=^3\sqrt{x}$ for the degree $\alpha=1/2$.
  	\[
	\left[f,g \right]^{+}_{1/2} =\llim{\epsilon}{0}{} 	\sqrt{\epsilon} \, \frac{1}{2}\frac{  ^3\sqrt{x +\epsilon} - ^3\sqrt{x }  }{\sqrt{x+ \epsilon}} +
	  	\sqrt{\epsilon} \, \frac{1}{3}\frac{  \sqrt{x +\epsilon} - \sqrt{x }  }{ ^3\sqrt{(x +\epsilon)^2}   } \epnt 
  	\]
  	For $x=0$ we have 
  	\[
  	\left[f,g \right]^{+}_{1/2} =\llim{\epsilon}{0}{} 	\sqrt{\epsilon} \, \frac{1}{2}\frac{  ^3\sqrt{ \epsilon}    }{\sqrt{ \epsilon}} +
  	\sqrt{\epsilon} \, \frac{1}{3}\frac{  \sqrt{\epsilon}  }{ ^3\sqrt{\epsilon^2}   }  = 0 \epnt
  	\]
  	For $x\neq 0$ since both $f(x)$ and $g(x)$ are continuous  we have 	$[f,g]^{+}_{1/2} = 0$.
  \end{example}
  
 \section{Product rule for fractional variation and $\alpha$-velocity of H\"{o}lderian functions}
 \label{sec:prodrule}
 In order to establish the main result we need to state two technical lemmas.
 The proofs of the lemmas are given only for completeness of presentation. 
 \begin{lemma}[1\textsuperscript{st} Product Lemma]
 	\label{lemma:Deltaplus}
 	For a product of functions $f (x) g (x) $ we have
 	\[
 	\deltaplus{f  g}{x}  =  \deltaplus{f}{x} \,  \deltaplus{g}{x}   + \mathrm{g}(x) \deltaplus{f}{x} + \deltaplus{g}{x}\, \mathrm{f} (x)  \epnt
	 \]
 \end{lemma}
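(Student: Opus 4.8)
The plan is to prove the identity by direct algebraic expansion, since the claim is a purely pointwise statement at fixed $x$ and $\epsilon$ that involves no limits and requires no regularity of $f$ or $g$. First I would substitute the definition of the forward difference operator from Definition \ref{def:deltas}, rewriting every occurrence of $\deltaplus{\cdot}{x}$ in terms of the four values $f(x+\epsilon)$, $f(x)$, $g(x+\epsilon)$, $g(x)$. In particular the left-hand side becomes $\deltaplus{f g}{x} = f(x+\epsilon) g(x+\epsilon) - f(x) g(x)$, while the leading term on the right-hand side expands as
\[
\deltaplus{f}{x}\,\deltaplus{g}{x} = \bigl( f(x+\epsilon) - f(x) \bigr)\bigl( g(x+\epsilon) - g(x) \bigr) \epnt
\]

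Next I would multiply out this product into its four monomials and add the two remaining right-hand contributions $g(x)\,\deltaplus{f}{x}$ and $\deltaplus{g}{x}\,f(x)$, each of which splits into two further terms. The decisive observation is then a bookkeeping one: the two cross terms $-f(x+\epsilon) g(x)$ and $-f(x) g(x+\epsilon)$ generated by the product cancel exactly against the terms $+g(x) f(x+\epsilon)$ and $+g(x+\epsilon) f(x)$ coming from the mixed contributions, while the repeated copies of $f(x) g(x)$ collapse to a single $-f(x) g(x)$. What survives is precisely $f(x+\epsilon) g(x+\epsilon) - f(x) g(x)$, which is the expansion of the left-hand side, completing the identity.

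There is no genuine analytic obstacle here; the statement is simply the finite-difference analogue of the Leibniz product rule and holds for arbitrary real- or complex-valued functions, with no appeal to continuity, monotonicity, or the H\"older hypotheses used elsewhere in the paper. The only point requiring care is the sign and attachment of the four cross terms, so the single step I would double-check is that $g(x)$ is paired with $\deltaplus{f}{x}$ and $f(x)$ with $\deltaplus{g}{x}$; since scalar multiplication is commutative this choice does not affect the final cancellation, but it is the one place where a transcription slip could occur.
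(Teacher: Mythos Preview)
Your proposal is correct and is essentially the same direct algebraic verification the paper gives: the paper starts from the left-hand side and inserts the cancelling terms $\pm f(x+\epsilon)g(x)$ and $\pm \deltaplus{g}{x}\,f(x)$ to reach the right-hand side, while you expand the right-hand side and collapse it to the left, which is the same computation run in reverse.
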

 \begin{proof}
 	Direct calculation shows that
 	\begin{flalign*}
 	\Delta^{+}_\epsilon [\mathrm{f \, g}]  & = \mathrm{f}\left( x+\epsilon\right) \,\mathrm{g}\left( x+\epsilon\right) -\mathrm{f}\left( x\right) \,\mathrm{g}\left( x\right) \\
 	& =\mathrm{f}\left( x+ \epsilon\right) \,\mathrm{g}\left( x+ \epsilon\right) -\mathrm{f}\left( x\right) \,\mathrm{g}\left( x \right) +  \mathrm{f}\left( x+ \epsilon\right) \mathrm{g}(x) - \mathrm{f}\left( x+\epsilon\right) \mathrm{g}(x) \\
 	& =  \mathrm{f}\left( x+\epsilon\right)   \deltaplus{g}{x}  + \mathrm{g}(x)   \deltaplus{f}{x}   \\
 	& = \mathrm{f}\left( x+\epsilon\right) \deltaplus{g}{x}   + \mathrm{g}(x) \deltaplus{f}{x}  + \deltaplus{g}{x} \, \mathrm{f} (x) - \deltaplus{g}{x}  \, \mathrm{f} (x) \\
 	& = \deltaplus{f}{x}  \,  \deltaplus{g}{x}    + \mathrm{g}(x) \Delta^{+}_\epsilon\mathrm{f}   + \deltaplus{g}{x}  \, \mathrm{f} (x)  \epnt
 	\end{flalign*}

 \end{proof}
 
 \begin{lemma}[2\textsuperscript{nd} Product Lemma]
 	\label{lemma:Deltaminus}
 	\[
 	\deltamin{f g}{x}   = - \deltamin{f}{x} \,  \deltamin{g}{x} + \mathrm{g}(x) \deltamin{f}{x} + \deltamin{g}{x}\, \mathrm{f} (x) \epnt
 	\]
 \end{lemma}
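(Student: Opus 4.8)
The plan is to prove the identity by a direct algebraic expansion of the backward difference of the product, exactly paralleling the proof of the 1\textsuperscript{st} Product Lemma; the only genuinely new feature is the leading minus sign, which I expect to trace to the backward orientation of the difference operator.

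First I would write out the definition,
\[
\deltamin{f g}{x} = f(x)\,g(x) - f(x-\epsilon)\,g(x-\epsilon) \ecma
\]
and then insert a telescoping cross term by adding and subtracting $f(x-\epsilon)\,g(x)$. Grouping the four resulting terms in pairs factors the right-hand side as
\[
\deltamin{f g}{x} = g(x)\,\deltamin{f}{x} + f(x-\epsilon)\,\deltamin{g}{x} \epnt
\]
This is the backward analogue of the intermediate step in Lemma \ref{lemma:Deltaplus}; note that here the surviving shifted factor is $f(x-\epsilon)$, evaluated at the retarded argument, rather than $f(x+\epsilon)$.

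The decisive step is then to express that shifted factor in terms of the value at $x$. Since the backward difference satisfies $f(x-\epsilon) = f(x) - \deltamin{f}{x}$, substituting this into the second term turns $f(x-\epsilon)\,\deltamin{g}{x}$ into $f(x)\,\deltamin{g}{x} - \deltamin{f}{x}\,\deltamin{g}{x}$. Collecting terms reproduces the claimed formula, with the mixed product $\deltamin{f}{x}\,\deltamin{g}{x}$ entering with a negative sign. It is precisely this back-substitution, carrying a minus sign, that distinguishes the backward case from the forward one in Lemma \ref{lemma:Deltaplus}, where the analogous identity $f(x+\epsilon)=f(x)+\deltaplus{f}{x}$ has a plus sign.

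I do not anticipate any real obstacle, since the computation is elementary and finite. The only point deserving attention is bookkeeping: one inserts a cross term (either $f(x-\epsilon)\,g(x)$ or $f(x)\,g(x-\epsilon)$) and then applies the matching back-substitution to whichever shifted factor survives, tracking signs carefully so that the asymmetry between the forward and backward product rules emerges correctly as stated.
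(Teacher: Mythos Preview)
Your proposal is correct and follows essentially the same route as the paper: both add and subtract the cross term $f(x-\epsilon)\,g(x)$ to reach $g(x)\,\deltamin{f}{x} + f(x-\epsilon)\,\deltamin{g}{x}$, and then replace the shifted factor $f(x-\epsilon)$ by $f(x)-\deltamin{f}{x}$ (the paper phrases this last step as adding and subtracting $\deltamin{g}{x}\,f(x)$, which is the same manipulation). Your explicit remark about why the sign differs from the forward case is a nice addition.
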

 \begin{proof}
 	Direct calculation shows that
 	\begin{flalign*}
 		\Delta^{-}_{\epsilon} [\mathrm{f \, g}]   & = \mathrm{f} \left( x\right) \,\mathrm{g}\left( x\right) - \mathrm{f}\left( x - \epsilon \right) \,\mathrm{g} \left( x -\epsilon\right) \\
 		& = \mathrm{f}\left( x \right)\,\mathrm{g}\left( x\right) -\mathrm{f}\left( x - \epsilon \right) \,\mathrm{g}\left( x-\epsilon\right)   +  \mathrm{f}\left( x - \epsilon\right) \mathrm{g}(x) - \mathrm{f}\left( x -\epsilon\right) \mathrm{g}(x) \\
 		& = \mathrm{f}\left( x- \epsilon \right) \deltamin{g}{x}   + \mathrm{g}(x)   \deltamin{f}{x}    \\
 		& = \mathrm{f}\left( x-\epsilon \right)  \deltamin{g}{x}    + \mathrm{g}(x) \deltamin{f}{x}    +  \deltamin{g}{x} \, \mathrm{f} (x) - \deltamin{g}{x}  \, \mathrm{f} (x) \\
 		& =	- \deltamin{f}{x}   \,   \deltamin{g}{x}    + \mathrm{g}(x) \deltamin{f}{x}     +  \deltamin{g}{x}  \, \mathrm{f} (x) \epnt
 	\end{flalign*}

 \end{proof}
 Having established these results, the product rule for the fractional variation operators and $\alpha$-velocity can be stated in the following Theorem:
 \begin{theorem}[Product rule for Fractional variation]
 	\label{lemma:prod1}
 	Let   $f(x) \in \holder{\alpha_1}$ and $g(x) \in \holder{\alpha_2}$. 
 	Then
 	\begin{align}
 	\fracvarplus {f \, g}{x}{\beta}=  \fracvarplus{f}{x}{\beta} g (x) +  \fracvarplus{g}{x}{\beta}f(x) 
 	+	[f,g]_\beta^{\epsilon+}(x) \ecma \\
 	\fracvarmin{f \, g}{x}{\beta}=  \fracvarmin{f}{x}{\beta} g (x) +  \fracvarmin{g}{x}{\beta}f(x) 
 	-	[f,g]_\beta^{\epsilon-}(x) \epnt
 	\end{align}
 \end{theorem}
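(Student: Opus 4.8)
The plan is to reduce the statement to pure algebra by invoking the two Product Lemmas and then dividing through by $\epsilon^\beta$. No limiting argument or H\"older estimate is needed at this stage, since both sides are evaluated at a fixed $\epsilon > 0$; the H\"older hypotheses on $f$ and $g$ only become relevant once one passes from the fractional \emph{variation} to the $\alpha$-\emph{velocity} by sending $\epsilon \to 0$, at which point Theorem \ref{th:limitcovar} governs the fate of the surviving co-variation term.

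For the forward identity I would start from the 1\textsuperscript{st} Product Lemma (Lemma \ref{lemma:Deltaplus}),
\[
\deltaplus{fg}{x} = \deltaplus{f}{x}\,\deltaplus{g}{x} + g(x)\,\deltaplus{f}{x} + \deltaplus{g}{x}\,f(x),
\]
and divide both sides by $\epsilon^\beta$. By Remark \ref{rem:diff2}, the left-hand side is exactly $\fracvarplus{fg}{x}{\beta}$, while the two single-difference terms become $g(x)\,\fracvarplus{f}{x}{\beta}$ and $f(x)\,\fracvarplus{g}{x}{\beta}$ respectively. The only step that requires a moment's care is the cross term: writing $\epsilon^\beta = \epsilon^{\beta/2}\,\epsilon^{\beta/2}$ and splitting the quotient gives
\[
\frac{\deltaplus{f}{x}\,\deltaplus{g}{x}}{\epsilon^\beta} = \frac{\deltaplus{f}{x}}{\epsilon^{\beta/2}}\cdot\frac{\deltaplus{g}{x}}{\epsilon^{\beta/2}} = \fracvar{f}{x}{\beta/2}{\epsilon+}\,\fracvar{g}{x}{\beta/2}{\epsilon+},
\]
which is precisely the forward co-variation operator $[f,g]_\beta^{\epsilon+}(x)$ of Definition \ref{def:fvar}. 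Collecting the three contributions yields the first equation.

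The backward identity is structurally identical but inherits a sign change directly from Lemma \ref{lemma:Deltaminus}, in which the cross term $\deltamin{f}{x}\,\deltamin{g}{x}$ appears with a leading minus sign. Dividing that lemma by $\epsilon^\beta$ and making the same half-order identification of $\epsilon^{-\beta}\deltamin{f}{x}\,\deltamin{g}{x}$ with $[f,g]_\beta^{\epsilon-}(x)$ then produces the term $-[f,g]_\beta^{\epsilon-}(x)$, giving the second equation. Thus the asymmetry between the $+$ and $-$ formulas is not a feature of the co-variation itself but a transparent consequence of the opposite sign in front of the cross term in the two Product Lemmas. I do not anticipate a genuine obstacle here; the one thing to get right is the bookkeeping of the $\epsilon^{\beta/2}$ splitting, which is what matches the cross term to the half-order convention built into Definition \ref{def:fvar}.
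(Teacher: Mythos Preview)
Your proof is correct and follows essentially the same route as the paper: invoke the two Product Lemmas, divide by $\epsilon^\beta$, and identify the cross term with the co-variation operator. Your explicit $\epsilon^{\beta/2}\cdot\epsilon^{\beta/2}$ splitting is a helpful clarification of a step the paper leaves implicit, and your remark that the H\"older hypotheses are inert at this fixed-$\epsilon$ stage is accurate.
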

 
 \begin{proof}
 	We prove the forward case first. 
 	\begin{description}
 		\item[Forward case] 
 		By Lemma \ref{lemma:Deltaplus} it follows that
 		\[
 		\Delta^{+}_{\epsilon} [\mathrm{f \, g}] (x)  =  
 		\Delta^{+}_{\epsilon}[\mathrm{f}](x) \,  \Delta^{+}_\epsilon [\mathrm{g}](x)   + \mathrm{g}(x) \Delta^{+}_{\epsilon}[\mathrm{f}]  (x) + \Delta^{+}_\epsilon[\mathrm{g}](x) \, \mathrm{f} (x) \epnt 
 		\]
 		Dividing the last quantity to $\epsilon^\beta$ yields
 		\begin{flalign*}
 		\frac{\Delta^{+}_{\epsilon} [\mathrm{f \, g}] (x)}{\epsilon^\beta}  &= \frac{\Delta^{+}_{\epsilon}\mathrm{f} (x)\,  \Delta^{+}_{\epsilon}\mathrm{g} (x) }{\epsilon^\beta} +  \fracvarplus{f}{x}{\beta} g (x) +  \fracvarplus{g}{x}{\beta}f(x)  \\
 		& = [f,g]_\beta^{\epsilon+}(x) + \fracvarplus{f}{x}{\beta} g (x) +  \fracvarplus{g}{x}{\beta}f(x) \ecma 
 		\end{flalign*}	 
 		which completes the proof.
 		\item[Backward case] 
 		The backwards case follows from analogous reasoning.
 		By Lemma \ref{lemma:Deltaminus} it follows that 
 		\[
 		\Delta^{-}_{\epsilon} [\mathrm{f \, g}] (x)  =   
 		- \Delta^{-}_{\epsilon}\mathrm{f} (x)\,  \Delta^{-}_{\epsilon}\mathrm{g} (x)  + \mathrm{g}(x) \Delta^{-}_{-\epsilon}\mathrm{f} (x)  + \Delta^{-}_{\epsilon}\mathrm{g}(x) \, \mathrm{f} (x)  \epnt
 		\]
 		Dividing the last quantity to $ \epsilon^\beta$ yields
 		\begin{flalign*}
 		\frac{\Delta^{-}_{\epsilon} [\mathrm{f \, g}] (x)}{\epsilon^\beta}  &= -\frac{\Delta^{-}_{\epsilon}\mathrm{f} (x) \,  \Delta^{-}_{\epsilon}\mathrm{g} (x) }{\epsilon^\beta}  + \fracvarmin{f}{x}{\beta} g (x) +  \fracvarmin{g}{x}{\beta}f(x)  \\
 		& =-[f,g]_\beta^{\epsilon-}(x) + \fracvarmin{f}{x}{\beta} g (x) +  \fracvarmin{g}{x}{\beta}f(x) \ecma
 		\end{flalign*}
 	
 	\end{description}
 	which completes the proof.
 \end{proof}
  \begin{corollary}[Limiting case]
  For the functions $f(x)$ and $g(x)$
  \begin{eqnarray}
 \fdiffplus{ [f \, g]  }{x}{\beta}    =   \fdiffplus{ f}{x}{\beta}  g (x) +   \fdiffplus{ g}{x}{\beta} f(x) 	+	[f,g]^{+}_\beta(x) \ecma \\
 \fdiffmin{ [f \, g]  }{x}{\beta}    =   \fdiffmin{ f}{x}{\beta}  g (x) +   \fdiffmin{ g}{x}{\beta} f(x) 	-	[f,g]^{-}_\beta(x) \epnt
  \end{eqnarray}
  \end{corollary}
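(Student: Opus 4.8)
The plan is to derive the corollary by passing to the limit $\epsilon \to 0$ in the algebraic product rule of Theorem \ref{lemma:prod1}. That theorem furnishes, for every fixed $\epsilon > 0$, the exact operator identities
\begin{align*}
\fracvarplus {f \, g}{x}{\beta} &=  \fracvarplus{f}{x}{\beta}\, g (x) +  \fracvarplus{g}{x}{\beta}\, f(x) +	[f,g]_\beta^{\epsilon+}(x) \ecma \\
\fracvarmin{f \, g}{x}{\beta} &=  \fracvarmin{f}{x}{\beta}\, g (x) +  \fracvarmin{g}{x}{\beta}\, f(x) -	[f,g]_\beta^{\epsilon-}(x) \epnt
\end{align*}
These are unconditional equalities, being consequences of the two Product Lemmas, and the coefficients $g(x)$, $f(x)$ do not depend on the increment $\epsilon$, so they act as constants when the limit is taken.

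First I would take $\llim{\epsilon}{0}{}$ of the forward identity term by term. By Definition \ref{def:frdiff} the left-hand side converges to $\fdiffplus{[f \, g]}{x}{\beta}$, while the quotients $\fracvarplus{f}{x}{\beta}$ and $\fracvarplus{g}{x}{\beta}$ converge to the forward velocities $\fdiffplus{f}{x}{\beta}$ and $\fdiffplus{g}{x}{\beta}$; multiplying by the $\epsilon$-independent factors $g(x)$ and $f(x)$ preserves these limits. By Definition \ref{def:fvar11} the remaining term converges to the forward fractional co-variation $[f,g]^{+}_\beta(x)$. Granting that these limits exist, additivity of limits yields both the existence of the limit on the left and the stated forward identity; the backward identity follows identically, the minus sign in front of the co-variation being inherited from the second line of Theorem \ref{lemma:prod1}.

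The one delicate point --- and essentially the only obstacle --- is the legitimacy of exchanging limit and sum, which is valid precisely when the three limits $\fdiffpm{f}{x}{\beta}$, $\fdiffpm{g}{x}{\beta}$ and $[f,g]^{\pm}_\beta(x)$ exist as finite quantities. Here the interplay of Proposition \ref{prop:frvarlim1} and Theorem \ref{th:limitcovar} must be kept in mind: for $f \in \holder{\alpha_1}$ and $g \in \holder{\alpha_2}$, each velocity is finite only for $\beta \leq \alpha_1$ (resp. $\beta \leq \alpha_2$), whereas the co-variation survives in the limit only when $\alpha_1 + \alpha_2 = \beta$. These two requirements are in tension, so the corollary should be read as a genuine identity exactly on the set of functions for which all three limits converge; on that set no work beyond the termwise limit remains, the $\epsilon$-level identity of Theorem \ref{lemma:prod1} having already carried the algebra.
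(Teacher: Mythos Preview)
Your argument is correct and matches the paper's intent: the corollary is stated immediately after Theorem \ref{lemma:prod1} with no separate proof, so the implied derivation is precisely the termwise passage to the limit $\epsilon \to 0$ that you carry out. Your final paragraph on the tension between Proposition \ref{prop:frvarlim1} and Theorem \ref{th:limitcovar} is in fact more careful than the paper, which simply records the identity without spelling out the existence hypotheses.
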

    	This corollary confirms a recent result by Tarasov \cite{Tarasov2013} who affirms violation of the Leibniz's rule for fractional derivatives of non-differentiable functions. 
    	Indeed, by Theorem \ref{th:limitcovar} the quantity $ [f,g]^{\pm}_{\beta} $ is not identically zero.
     \begin{proposition}[Square case]
     For the function  $f(x)$ we have
     	\begin{eqnarray}
     	\fdiffplus{ f^2 }{x}{\beta}    =  2 \,  \fdiffplus{ f}{x}{\beta}  f (x) + [f,f]^{+}_\beta(x) \ecma \\
     	\fdiffmin{ f^2  }{x}{\beta}    =  2 \, \fdiffmin{ f}{x}{\beta}  f (x) -  	[f,f]^{-}_\beta(x) \epnt
     	\end{eqnarray}
     \end{proposition}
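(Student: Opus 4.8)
The plan is to obtain the Square case as the diagonal specialization $g=f$ of the Limiting case Corollary, which is itself the $\epsilon\to 0$ limit of the Product rule for fractional variation (Theorem \ref{lemma:prod1}). First I would set $g(x)=f(x)$ in the forward identity
\[
\fdiffplus{ [f \, g] }{x}{\beta} = \fdiffplus{ f}{x}{\beta}\, g (x) + \fdiffplus{ g}{x}{\beta}\, f(x) + [f,g]^{+}_\beta(x) \epnt
\]
The product $f\,g$ becomes $f^{2}$, so the left-hand side turns into $\fdiffplus{ f^2 }{x}{\beta}$, while the two velocity terms on the right both collapse to $\fdiffplus{ f}{x}{\beta}\, f(x)$ and therefore add up to $2\,\fdiffplus{ f}{x}{\beta}\, f(x)$. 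The co-variation term becomes the diagonal $[f,f]^{+}_\beta(x)$, which yields the first asserted equation. The backward identity follows in exactly the same fashion from the backward Limiting case identity, the sole difference being the minus sign carried by the co-variation, which propagates unchanged to give $-[f,f]^{-}_\beta(x)$.

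The only step needing care is the passage to the limit. Rather than substitute into the already-limited identity, I would substitute $g=f$ into the finite-$\epsilon$ form (Theorem \ref{lemma:prod1}), where the identity
\[
\fracvarplus{f^2}{x}{\beta} = 2\,\fracvarplus{f}{x}{\beta}\, f(x) + [f,f]_\beta^{\epsilon+}(x)
\]
holds unconditionally, being a purely algebraic rearrangement of the difference quotient. Taking $\epsilon\to 0$ and invoking the sum rule for limits then produces the claimed statement, the substitution $g=f$ commuting with every operation involved since they are all algebraic in the increments. No new estimate is required beyond those already used for the general product rule.

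I expect no genuine obstacle: the statement is a corollary-level consequence of a result proved earlier, and the proof amounts to recording that the two distinct cross terms of the general product rule coincide on the diagonal and hence contribute the factor $2$. The one point worth emphasising is the interpretive content rather than a technical difficulty, namely that the diagonal co-variation $[f,f]^{\pm}_\beta$ need not vanish --- by Theorem \ref{th:limitcovar} it survives precisely when $\beta = 2\alpha$ for $f\in\holder{\alpha}$ --- so that the square rule genuinely departs from the naive expression $2\,\fdiffplus{f}{x}{\beta}\,f(x)$ and exhibits the same deviation from Leibniz's rule already recorded for the general product.
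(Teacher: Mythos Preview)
Your proposal is correct and matches the paper's treatment: the paper states the Square case as a proposition without proof, immediately after the Limiting case corollary, so the intended argument is precisely the diagonal specialization $g=f$ that you describe. Your added care in substituting at the finite-$\epsilon$ level before passing to the limit is a sound refinement, but no more is needed than what the paper implicitly assumes.
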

     
       \begin{corollary}[Quotient case]
       	 For the functions $f(x)$ and $g(x)$
       	\begin{eqnarray}
       	\fdiffplus{   [f / g ]   }{x}{\beta}    & =  \frac{  \fdiffplus{ f}{x}{\beta}  g (x) -  \fdiffplus{ g}{x}{\beta} f(x) -[f,g]^{+}_{\beta} }{g^2(x)}	 \ecma \\
       	\fdiffmin{ [f / g ]  }{x}{\beta}    & =   \frac{\fdiffmin{ f}{x}{\beta}  g (x) -   \fdiffmin{ g}{x}{\beta} f(x) + [f,g]^{-}_{\beta}  }{g^2(x)}	  \ecma
       	\end{eqnarray}
       	where we assume that $g(x) \neq 0$.
       \end{corollary}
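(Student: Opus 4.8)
The plan is to reduce the quotient to a product by writing $f/g = f \cdot (1/g)$ and then invoking the product rule for the $\alpha$-velocity established in the Limiting case Corollary following Theorem \ref{lemma:prod1}. Since the whole computation hinges on the reciprocal $1/g$, I would first isolate two auxiliary identities for it, treating the forward and backward directions in parallel, and only then assemble the result.

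First I would record the exact increment of the reciprocal,
\[
\Delta^{\pm}_{\epsilon}[1/g](x) = \frac{-\,\Delta^{\pm}_{\epsilon}[g](x)}{g(x\pm\epsilon)\,g(x)} \ecma
\]
which is a one-line manipulation of $1/g(x\pm\epsilon) - 1/g(x)$. Dividing by $\epsilon^{\beta}$ and using that $g$ is continuous and nonvanishing at $x$ (so the prefactor $1/(g(x\pm\epsilon)g(x))$ is a convergent multiplier tending to $1/g^{2}(x)$) yields the reciprocal rule $\fdiffpm{[1/g]}{x}{\beta} = -\,\fdiffpm{g}{x}{\beta}/g^{2}(x)$. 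Multiplying the same exact increment by $\Delta^{\pm}_{\epsilon}[f](x)/\epsilon^{\beta}$ and again sending $g(x\pm\epsilon)\to g(x)$ inside the limit gives the co-variation identity $[f,1/g]^{\pm}_{\beta}(x) = -\,[f,g]^{\pm}_{\beta}(x)/g^{2}(x)$, where I recognize the surviving quotient $\Delta^{\pm}_{\epsilon}[f]\,\Delta^{\pm}_{\epsilon}[g]/\epsilon^{\beta}$ as the defining expression of the co-variation in Definition \ref{def:fvar11}.

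With these two identities in hand I would substitute into the product rule applied to $f\cdot(1/g)$. In the forward direction the Corollary gives $\fdiffplus{[f/g]}{x}{\beta} = \fdiffplus{f}{x}{\beta}/g(x) + f(x)\,\fdiffplus{[1/g]}{x}{\beta} + [f,1/g]^{+}_{\beta}(x)$; inserting the reciprocal rule and the co-variation identity and collecting everything over $g^{2}(x)$ produces exactly the claimed expression with the $-[f,g]^{+}_{\beta}$ term. The backward direction is identical except that the backward product rule carries $-[f,1/g]^{-}_{\beta}$; combined with the sign in $[f,1/g]^{-}_{\beta} = -[f,g]^{-}_{\beta}/g^{2}$ the two minus signs cancel, so the co-variation enters with a $+$ sign and reproduces the stated backward formula. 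The residual work is purely this sign bookkeeping.

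The main obstacle I anticipate is legitimizing the passage $g(x\pm\epsilon)\to g(x)$ under the limit when the individual velocities may be unbounded: replacing $1/(g(x\pm\epsilon)g(x))$ by $1/g^{2}(x)$ is only unproblematic against a factor that already converges, so the cleanest route keeps the reciprocal rule and the co-variation identity as \emph{separate} limits, each convergent on its own, rather than expanding $1/g(x\pm\epsilon)$ in powers of $\Delta^{\pm}_{\epsilon}[g]$. A naive expansion would manufacture spurious higher co-variation contributions such as $[g,g]^{\pm}_{\beta}$, and I would dispose of these by appeal to the Bounds of co-variation theorem (Theorem \ref{th:limitcovar}): the order relations that make the displayed terms finite are disjoint from those that make such higher co-variations nonvanishing, so they drop out in every regime where the stated formula has finite content. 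Continuity and nonvanishing of $g$ at $x$ is the standing hypothesis that makes each of these steps valid.
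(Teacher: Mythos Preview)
Your approach is essentially identical to the paper's: write $f/g=f\cdot(1/g)$, apply the product rule corollary, and feed in the two auxiliary identities $\fdiffpm{[1/g]}{x}{\beta}=-\fdiffpm{g}{x}{\beta}/g^{2}(x)$ and $[f,1/g]^{\pm}_{\beta}=-[f,g]^{\pm}_{\beta}/g^{2}(x)$, each obtained from the exact increment $\Delta^{\pm}_{\epsilon}[1/g]=-\Delta^{\pm}_{\epsilon}[g]/(g(x)g(x\pm\epsilon))$ followed by the limit $g(x\pm\epsilon)\to g(x)$. Your closing caveat about needing each factor to converge separately before pulling out $1/g^{2}(x)$ is well taken and is in fact tacitly assumed (but not argued) in the paper; the remark about spurious higher co-variations is unnecessary since neither you nor the paper ever expands $1/g(x\pm\epsilon)$.
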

       \begin{proof}
       	We will prove the forward case first.
       	Since $g(x) \neq 0$ we have
 \[
  \fdiffplus{ f}{x}{\beta}  \frac{1}{g (x)} +   \fdiffplus{\left[ 1/g \right]  }{x}{\beta} f(x) 	+	[f,1/g]^{+}_\beta(x) \epnt
 \]
 The fractional variation of the quotient function can be calculated as
 \[
\fracvarplus {  1/g  }{x}{\beta} = \frac{1}{\epsilon^\beta} \left(\frac{1}{g (x+ \epsilon)}- \frac{1}{g(x) }  \right) = -  \frac{\fracvarplus {g}{x}{\beta} }{g(x) g(x +\epsilon)} \epnt
 \]
 Taking the limit yields 
 \[
  \fdiffplus{\left[ 1/g \right]  }{x}{\beta} = - \llim{\epsilon}{0}{\frac{\fracvarplus {g}{x}{\beta} }{g(x) g(x +\epsilon)}} = - \frac{\fdiffplus{ g }{x}{\beta} }{g^2(x)} \ecma
  \]
  since by assumption $g(x) \neq 0$.
  Therefore, for the fractional co-variation it follows that
  \[
  [f, 1/g]^{\epsilon+}_{\beta} (x) = - \frac{\fracvarplus {  f  }{x}{\beta/2} \fracvarplus {  g }{x}{\beta/2}}{g (x) g(x + \epsilon)} 
  = - \frac{[f,g]^{\epsilon+}_{\beta}}{g (x) g(x + \epsilon)}
  \]
  and
  \[
  [f, 1/g]^{ +}_{\beta} (x) = - \frac{[f,g]^{+}_{\beta}}{g^2(x)} \ecma
  \]
  respectively.
  
  For the backward case  we have to not that 
   \[
   \fracvarmin {  1/g   }{x}{\beta} = \frac{1}{\epsilon^\beta} \left( \frac{1}{g(x)} - \frac{1}{g (x - \epsilon)  }  \right) = -  \frac{\fracvarmin {g}{x}{\beta} }{g(x) g(x -\epsilon)} \epnt
   \]
   Taking the limit yields 
   \[
   \fdiffmin{\left[ 1/g \right]  }{x}{\beta} = - \llim{\epsilon}{0}{\frac{\fracvarmin {g}{x}{\beta} }{g(x) g(x -\epsilon)}} = - \frac{\fdiffmin{ g }{x}{\beta}}{g^2(x)  }  \ecma
   \]
   since by assumption $g(x) \neq 0$.
    Therefore, for the fractional co-variation it follows that
    \[
    [f, 1/g]^{\epsilon-}_{\beta} (x) = - \frac{\fracvarmin{  f  }{x}{\beta/2} \fracvarplus {  g }{x}{\beta/2}}{g(x) g(x -\epsilon)} 
    = - \frac{[f,g]^{\epsilon-}_{\beta}}{g(x) g(x -\epsilon)}
    \]
    and taking the limit finally yields
    \[
    [f, 1/g]^{ -}_{\beta} (x) = - \frac{[f,g]^{-}_{\beta}}{g^2(x)} \ecma
    \]
    respectively.
  \end{proof}
  The proof established also the following additional results:
  \begin{proposition}
  For the functions $f(x)$ and $g(x) \neq 0 $ we have
  	\[
  	[f, 1/g]^{ \pm}_{\beta} (x) = - \frac{[f,g]^{\pm}_{\beta}}{g^2(x)} \epnt
  	\]
  \end{proposition}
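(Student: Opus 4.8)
The plan is to reduce the whole statement to the reciprocal identity for the half-order fractional variation, which is precisely the algebraic engine already used in the proof of the preceding Quotient corollary. First I would unfold the bilinear definition of the co-variation (Definition \ref{def:fvar}) for the pair $f$ and $1/g$,
\[
[f, 1/g]^{\epsilon\pm}_{\beta}(x) = \fracvar{f}{x}{\beta/2}{\epsilon\pm}\, \fracvar{1/g}{x}{\beta/2}{\epsilon\pm} \ecma
\]
so that everything hinges on rewriting the half-order variation of the reciprocal.

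The central step is the reciprocal identity. For the forward operator, a direct manipulation of the difference quotient gives
\[
\fracvarplus{1/g}{x}{\beta/2} = \frac{1}{\epsilon^{\beta/2}}\left( \frac{1}{g(x+\epsilon)} - \frac{1}{g(x)} \right) = -\frac{\fracvarplus{g}{x}{\beta/2}}{g(x)\, g(x+\epsilon)} \ecma
\]
with the backward operator obtained by replacing $x+\epsilon$ by $x-\epsilon$. This is the same identity already derived at order $\beta$ inside the Quotient corollary; I would simply note that it holds for an arbitrary exponent and apply it at order $\beta/2$. Substituting it into the product above and recognizing the numerator as the $\epsilon$-indexed co-variation of $f$ and $g$ yields
\[
[f, 1/g]^{\epsilon+}_{\beta}(x) = -\frac{\fracvarplus{f}{x}{\beta/2}\, \fracvarplus{g}{x}{\beta/2}}{g(x)\, g(x+\epsilon)} = -\frac{[f,g]^{\epsilon+}_{\beta}(x)}{g(x)\, g(x+\epsilon)} \epnt
\]

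Finally I would pass to the limit $\epsilon \to 0$. Since $g$ is continuous and $g(x) \neq 0$ by hypothesis, the denominator $g(x)\, g(x+\epsilon)$ converges to $g^2(x) \neq 0$, and therefore
\[
[f, 1/g]^{+}_{\beta}(x) = -\frac{[f,g]^{+}_{\beta}(x)}{g^2(x)} \ecma
\]
with the backward case following verbatim via $g(x-\epsilon) \to g(x)$. I do not expect a genuine obstacle, as the result is essentially a byproduct of the Quotient corollary's computation; the only point deserving care is the standing assumption $g(x) \neq 0$, which both makes $1/g$ well defined in a neighborhood of $x$ and prevents the limiting denominator from vanishing.
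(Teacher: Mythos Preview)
Your proposal is correct and mirrors exactly what the paper does: the proposition is simply extracted as a byproduct of the Quotient corollary's computation, obtained by applying the reciprocal identity for the fractional variation at order $\beta/2$, recognizing the resulting numerator as $[f,g]^{\epsilon\pm}_{\beta}$, and letting $g(x\pm\epsilon)\to g(x)$ in the denominator. There is nothing to add.
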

  \begin{proposition}
  For the function   $g(x) \neq 0 $ we have 
  	\[
  	 \fdiffpm{\left[ 1/g \right]  }{x}{\beta} =  - \frac{\fdiffpm{ g }{x}{\beta} }{g^2(x)} \epnt
   \]
  \end{proposition}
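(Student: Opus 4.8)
The plan is to apply the definition of fractional velocity (Definition \ref{def:frdiff}) directly to the reciprocal $1/g$ and to reduce the resulting difference quotient to that of $g$ itself. This is in fact precisely the intermediate computation carried out inside the proof of the Quotient case Corollary, so the proof amounts to isolating and stating that step on its own.

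First I would write out the forward fractional variation of $1/g$ and combine the two reciprocals over the common denominator $g(x)\,g(x+\epsilon)$. Since $g(x)\neq 0$ and $g$ is continuous (being H\"olderian, hence $\fclass{C}{0}$), we have $g(x+\epsilon)\neq 0$ for all sufficiently small $\epsilon$, so this is legitimate and gives
\[
\fracvarplus{1/g}{x}{\beta} = \frac{1}{\epsilon^\beta}\left(\frac{1}{g(x+\epsilon)}-\frac{1}{g(x)}\right) = -\frac{\fracvarplus{g}{x}{\beta}}{g(x)\,g(x+\epsilon)} \ecma
\]
where I have used the elementary identity $\dfrac{1}{g(x+\epsilon)}-\dfrac{1}{g(x)} = -\dfrac{\deltaplus{g}{x}}{g(x)\,g(x+\epsilon)}$.

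Next I would pass to the limit $\epsilon\to 0$. Continuity of $g$ forces $g(x+\epsilon)\to g(x)$, so the denominator tends to $g^2(x)\neq 0$, while the numerator tends to $\fdiffplus{g}{x}{\beta}$ by the definition of fractional velocity. The limit law for quotients then yields
\[
\fdiffplus{[1/g]}{x}{\beta} = -\frac{\fdiffplus{g}{x}{\beta}}{g^2(x)} \epnt
\]
The backward case is handled identically after replacing $g(x+\epsilon)$ by $g(x-\epsilon)$ and the forward difference $\deltaplus{g}{x}$ by the backward difference $\deltamin{g}{x}$, which together give the claimed $\pm$ formula.

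The only subtlety worth flagging, and hence the main thing to watch, is the existence of the limit of the denominator: this is guaranteed by the hypothesis $g(x)\neq 0$ together with the continuity of $g$ at $x$, which keeps $g(x\pm\epsilon)$ bounded away from zero for small $\epsilon$ and lets the factor $1/\bigl(g(x)\,g(x\pm\epsilon)\bigr)$ converge to $1/g^2(x)$. Everything else is an elementary algebraic manipulation combined with the product rule for limits, so no further technical machinery is required.
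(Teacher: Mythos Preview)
Your proposal is correct and follows essentially the same approach as the paper: the paper obtains this proposition as a by-product of the proof of the Quotient case Corollary, computing $\fracvarplus{1/g}{x}{\beta}=-\fracvarplus{g}{x}{\beta}/\bigl(g(x)g(x+\epsilon)\bigr)$ and passing to the limit using $g(x)\neq 0$, exactly as you do. Your added remark about continuity of $g$ ensuring $g(x\pm\epsilon)\neq 0$ for small $\epsilon$ is a welcome clarification that the paper leaves implicit.
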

      \begin{example}[Direct calculation of quadratic variation]
      	We will calculate the quadratic variation of $f(x)= \sqrt{x}$. Then
      	\[
      	\Delta^{+}_\epsilon f(x)= \sqrt{x+ \epsilon} - \sqrt{x} =	\frac{ \left( \sqrt{x+ \epsilon} - \sqrt{x} \right) \left(\sqrt{x+ \epsilon} + \sqrt{x}  \right)  }{\sqrt{x+ \epsilon} + \sqrt{x} } =
      	\]
      	\[
      	\frac{ x+ \epsilon - x}{\sqrt{x+ \epsilon} + \sqrt{x}}= \frac{ \epsilon }{\sqrt{x+ \epsilon} + \sqrt{x}} 
      	\]
      	Therefore 
      	\[ 
      	[ \sqrt{x}, \sqrt{x} ]^{\epsilon+} = \frac{\left(\frac{ \epsilon }{\sqrt{x+ \epsilon} + \sqrt{x}}  \right)^2 }{\epsilon} =
      	\frac{ \epsilon }{ \left( \sqrt{x+ \epsilon} + \sqrt{x} \right)^2} 
      	\]
      	Therefore, 
      	\[
      	[ \sqrt{x}, \sqrt{x} ]^{+} = \left\{
      	\begin{array}{ll}   
      	1  ,& x=0  \\      
      	0  ,&  x > 0
      	\end{array}
      	\right.
      	\]
      \end{example}
      \begin{example}[Calculation of quadratic variation from Theorem \ref{th:frcovarlim}]
      	\[
      	[ \sqrt{x}, \sqrt{x} ]^{+} = \llim{\epsilon}{0}{} 2 \; \frac{1}{2 \, \sqrt{x +\epsilon}}\left( \sqrt{x +\epsilon} -\sqrt{x}\right) = 1-  \llim{\epsilon}{0}{} \frac{\sqrt{x}}{\sqrt{x +\epsilon}}  
      	\]
      	Therefore, 
      	\[
      	[ \sqrt{x}, \sqrt{x} ]^{+} = \left\{
      	\begin{array}{ll}   
      	1  ,& x=0  \ecma \\      
      	0  ,&  x > 0 \epnt
      	\end{array}
      	\right.
      	\]
      \end{example}
 Combining the evaluation of the product rules leads to the following results
\begin{proposition}
For functions $f(x)$ and $g(x)$
\begin{flalign*}
\fdiffplus{ \left[ f \ g \right]  }{x}{\beta}  = & \frac{1}{\beta} \llim{\epsilon}{0}{} \epsilon^{1-\beta}
\left( g^\prime(x + \epsilon) f(x)+ g (x) f^\prime(x + \epsilon) \right) + \\
& \frac{1}{\beta} \llim{\epsilon}{0}{} \epsilon^{1-\beta} \left(  g^\prime(x + \epsilon) \deltaplus{f}{x}  + f^\prime(x + \epsilon) \deltaplus{g}{x}  \right) 
\end{flalign*}
\end{proposition}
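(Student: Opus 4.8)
The plan is to read this identity as the superposition of three facts already proved in Sections \ref{sec:frdiff}--\ref{sec:cofrdiff}, so that no new estimate is required. The natural starting point is the limiting case of the product rule for fractional variation, i.e.\ the $\alpha$-velocity corollary of Theorem \ref{lemma:prod1},
\[
\fdiffplus{[f\,g]}{x}{\beta} = \fdiffplus{f}{x}{\beta}\, g(x) + \fdiffplus{g}{x}{\beta}\, f(x) + [f,g]^{+}_{\beta}(x) \epnt
\]
The whole task is then to rewrite each of the three terms on the right-hand side in terms of ordinary first derivatives and forward differences, and to collect the outcome into the two bracketed limits of the claim.

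First I would dispose of the two velocity-times-value terms. Taking $f,g \in \fclass{C}{1}$ so that Proposition \ref{th:fdiff} applies, I substitute $\fdiffplus{f}{x}{\beta} = \frac{1}{\beta}\llim{\epsilon}{0}{\epsilon^{1-\beta} f^{\prime}(x+\epsilon)}$ and the analogous expression for $g$. Since $f(x)$ and $g(x)$ are constant with respect to the $\epsilon$-limit, they move inside, and the two single-term limits add into
\[
\frac{1}{\beta}\llim{\epsilon}{0}{\epsilon^{1-\beta}\bigl(g^{\prime}(x+\epsilon) f(x) + g(x)\, f^{\prime}(x+\epsilon)\bigr)} \ecma
\]
which is precisely the first bracket of the statement.

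Next I would replace the co-variation $[f,g]^{+}_{\beta}(x)$ by its derivative representation, which is exactly the forward identity of Theorem \ref{th:frcovarlim},
\[
[f,g]^{+}_{\beta} = \frac{1}{\beta}\llim{\epsilon}{0}{\epsilon^{1-\beta} f^{\prime}(x+\epsilon)\,\deltaplus{g}{x}} + \frac{1}{\beta}\llim{\epsilon}{0}{\epsilon^{1-\beta} g^{\prime}(x+\epsilon)\,\deltaplus{f}{x}} \epnt
\]
Recombining the two limits under one $\epsilon$-limit produces the second bracket of the claim, and adding the first bracket yields the asserted formula.

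Because the argument is purely substitution followed by a regrouping of limits, there is no genuine analytic difficulty; the only point needing care is the regularity bookkeeping. Both Proposition \ref{th:fdiff} and Theorem \ref{th:frcovarlim} presuppose that $f$ and $g$ are $\fclass{C}{1}$ with derivatives continuous on $[x, x+\epsilon]$, so this hypothesis must be understood to stand implicitly over the proposition. Under it each of the individual $\epsilon$-limits above exists (for $\beta<1$ the velocity terms in fact vanish, consistent with Proposition \ref{prop:frvarlim1}), which is what legitimizes the repeated splitting and recombination of limits; absent such regularity the manipulation would be formal only.
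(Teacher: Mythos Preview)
Your proposal is correct and is precisely the argument the paper has in mind: the proposition is introduced with the phrase ``Combining the evaluation of the product rules leads to the following results'' and is left without a written proof, so the intended derivation is exactly the substitution of Proposition~\ref{th:fdiff} and Theorem~\ref{th:frcovarlim} into the limiting product-rule corollary of Theorem~\ref{lemma:prod1}, followed by regrouping. Your remark that the $\fclass{C}{1}$ hypothesis must be in force (even though the statement omits it) is a fair and necessary observation.
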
    
\begin{proposition}
   For functions $f(x)$ and $g(x)$
\begin{flalign*}   
    \fdiffmin{ \left[ f \ g \right]  }{x}{\beta} = &\frac{1}{\beta} \llim{\epsilon}{0}{}  \epsilon^{1-\beta}
    \left( g^\prime(x - \epsilon) f(x) + g (x) f^\prime(x - \epsilon)  \right) - \\
   & \frac{1}{\beta} \llim{\epsilon}{0}{}  \epsilon^{1-\beta} \left(  g^\prime(x - \epsilon) \deltamin{f}{x} + f^\prime(x - \epsilon) \deltamin{g}{x}  \right) 
 \end{flalign*}
\end{proposition}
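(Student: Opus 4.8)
The plan is to assemble the claimed identity directly from three results already established, with no new estimates required; the standing hypothesis is $f, g \in \fclass{C}{1}$ (inherited from the theorems invoked below). First I would apply the limiting form of the backward product rule (the Corollary to Theorem~\ref{lemma:prod1}),
\[
\fdiffmin{ [f \, g]  }{x}{\beta}    =   \fdiffmin{ f}{x}{\beta}  g (x) +   \fdiffmin{ g}{x}{\beta} f(x) 	-	[f,g]^{-}_\beta(x) \epnt
\]
This reduces the task to rewriting each of the three terms on the right in terms of ordinary derivatives, which is exactly what the $\fclass{C}{1}$ hypothesis makes possible.

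Next I would substitute the continuous-variation formula of Proposition~\ref{th:fdiff} for the two backward velocities, namely $\fdiffmin{f}{x}{\beta} = \frac{1}{\beta}\llim{\epsilon}{0}{\epsilon^{1-\beta} f^\prime(x-\epsilon)}$ and its analogue for $g$, together with the backward formula of Theorem~\ref{th:frcovarlim} for the co-variation,
\[
[f,g]^{-}_\beta(x) = \frac{1}{\beta}\llim{\epsilon}{0}{}\epsilon^{1-\beta} f^\prime(x-\epsilon)\,\deltamin{g}{x} + \frac{1}{\beta}\llim{\epsilon}{0}{}\epsilon^{1-\beta} g^\prime(x-\epsilon)\,\deltamin{f}{x} \epnt
\]
Inserting these, the two velocity contributions combine into the single limit $\frac{1}{\beta}\llim{\epsilon}{0}{}\epsilon^{1-\beta}\left( g^\prime(x-\epsilon) f(x) + g(x) f^\prime(x-\epsilon)\right)$, while the subtracted co-variation supplies precisely $-\frac{1}{\beta}\llim{\epsilon}{0}{}\epsilon^{1-\beta}\left( g^\prime(x-\epsilon)\,\deltamin{f}{x} + f^\prime(x-\epsilon)\,\deltamin{g}{x}\right)$. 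Collecting the two pieces yields the stated identity, which mirrors the preceding forward proposition term for term, with the sign of the co-variation reversed in accordance with the Second Product Lemma (Lemma~\ref{lemma:Deltaminus}).

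The only step requiring care — the one I would treat as the main obstacle — is the manipulation of limits: merging the separate limits defining $\fdiffmin{f}{x}{\beta} g(x)$ and $\fdiffmin{g}{x}{\beta} f(x)$ into one, and retaining the two-limit form of the co-variation, is legitimate only because each constituent limit converges finitely. Under $f, g \in \fclass{C}{1}$ this finiteness is guaranteed by Proposition~\ref{th:fdiff} and Theorem~\ref{th:frcovarlim} themselves, so nothing beyond additivity of convergent limits is invoked and the assembly is justified.
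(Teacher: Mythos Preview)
Your proposal is correct and follows exactly the route the paper intends: it states that these propositions arise by ``combining the evaluation of the product rules,'' i.e., plugging Proposition~\ref{th:fdiff} and Theorem~\ref{th:frcovarlim} into the limiting backward product rule. Your added remark on the legitimacy of merging the limits is a welcome clarification that the paper itself omits.
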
    
        
 \subsection{Relation to the Leibniz's rule}
 \label{sec:leibitz}

 Under the particular notation in Remark \ref{rem:diff1} the product rule restated in the  following way:
 \begin{proposition}
 	\label{lemma:dprod}
 	Under the notation used in this work, the product rule for ordinary differential quotients can be expressed by
 	\begin{align}
 	\diffplus{f \, g}{x}{\epsilon}= \diffplus{f}{x}{\epsilon} g (x) + \diffplus{g}{x}{\epsilon} f(x) 
 	+	[f,g]_\epsilon^{+}(x) \ecma \\
 	\diffmin{f \, g}{x}{\epsilon}= \diffmin{f}{x}{\epsilon} g (x) + \diffmin{g}{x}{\epsilon} f(x)  
 	- [f,g]_\epsilon^{-}(x) \epnt
 	\end{align}
 \end{proposition}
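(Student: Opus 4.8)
The plan is to recognise that this Proposition is precisely the $\beta = 1$ instance of the Product rule for Fractional variation (Theorem \ref{lemma:prod1}), transcribed into the differential-quotient notation of Remark \ref{rem:diff1}. Two identifications make the correspondence exact. First, by Remark \ref{rem:diff1} the ordinary forward and backward differential quotients are the fractional variation operators of order $1$, so that $\diffplus{f}{x}{\epsilon} = \fracvarplus{f}{x}{1}$ and $\diffmin{f}{x}{\epsilon} = \fracvarmin{f}{x}{1}$. Second, the co-variation operator written $[f,g]_\epsilon^{\pm}(x)$ in the statement is the $\beta = 1$ member of the family of Definition \ref{def:fvar}, namely $[f,g]_1^{\epsilon\pm}(x) = \fracvar{f}{x}{1/2}{\epsilon\pm}\,\fracvar{g}{x}{1/2}{\epsilon\pm}$, the order index being suppressed by the convention stated there.

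Granting these identifications, I would simply substitute $\beta = 1$ into the two equations of Theorem \ref{lemma:prod1}: its forward equation becomes the first display and its backward equation the second, the sign reversal in front of the co-variation carrying over verbatim from the backward case of that theorem. Equivalently, and in a fully self-contained way, both formulas can be read straight off the Product Lemmas. Dividing the identity of Lemma \ref{lemma:Deltaplus} by $\epsilon$ turns its three right-hand terms into $\Delta^{+}_{\epsilon}[f](x)\,\Delta^{+}_{\epsilon}[g](x)/\epsilon = [f,g]_\epsilon^{+}(x)$ together with $g(x)\diffplus{f}{x}{\epsilon}$ and $f(x)\diffplus{g}{x}{\epsilon}$, while dividing the identity of Lemma \ref{lemma:Deltaminus} by $\epsilon$ yields the backward formula, its leading minus sign producing the $-[f,g]_\epsilon^{-}(x)$ term. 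Since everything happens at fixed $\epsilon$, no limit is taken and no Hölder regularity on $f$ or $g$ is invoked; this is why the Proposition can be stated for arbitrary functions even though Theorem \ref{lemma:prod1} carries Hölder hypotheses.

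I do not anticipate a genuine obstacle: the result is a direct specialization and the only point demanding care is the bookkeeping of the co-variation term. Concretely, one must check that the product $\fracvar{f}{x}{1/2}{\epsilon\pm}\,\fracvar{g}{x}{1/2}{\epsilon\pm}$ collapses to $\Delta^{\pm}_{\epsilon}[f](x)\,\Delta^{\pm}_{\epsilon}[g](x)/\epsilon$ once the two half-order denominators $\epsilon^{1/2}$ are multiplied, so that it coincides with the $\beta = 1$ co-variation operator and matches the displayed formulas exactly. With that verification the two equations follow immediately.
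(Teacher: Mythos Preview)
Your proposal is correct and matches the paper's own proof, which likewise treats the Proposition as the specialization of Theorem \ref{lemma:prod1} to the case $\alpha=1$ (equivalently $\beta=1$). Your additional self-contained derivation via the Product Lemmas is in fact just the proof of Theorem \ref{lemma:prod1} rerun at $\beta=1$, so it is not a genuinely different route.
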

 \begin{proof}
 The result has been stated in \cite{Cresson2005, Cresson2011}. 
 However, we can consider it as a specialization of the $\alpha$-fractional velocity product rule (Theorem \ref{lemma:prod1}) for the case $\alpha=1$.
 \end{proof}
 If we take the limit we arrive at the usual Leibniz rule for derivatives:
  \begin{proposition}
   	\begin{align}
   	\diffplus{f \, g}{x}{ }= \diffplus{f}{x}{ } g (x) + \diffplus{g}{x}{ } f(x) 
   	+	[f,g]^{+}(x) \\
   	\diffmin{f \, g}{x}{ }= \diffmin{f}{x}{ } g (x) + \diffmin{g}{x}{ } f(x)  
   	- [f,g]^{-}(x)
   	\end{align}
   	For functions  in $\mathbb{C}^1$ with uniformly continuous derivatives taking the limit to zero yields the usual Leibniz rule since
   	$ [f,g]^{-}(x) = [f,g]^{+}(x)=0$. 
   \end{proposition}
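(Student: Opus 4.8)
The plan is to obtain the two displayed identities as the $\epsilon\to 0$ limit of the $\beta=1$ product rule already recorded in Proposition \ref{lemma:dprod}, and then to verify that under the stated regularity the co-variation terms collapse to zero, leaving only the two Leibniz summands.

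First I would start from the $\epsilon$-level identity
\[
\diffplus{f \, g}{x}{\epsilon}= \diffplus{f}{x}{\epsilon} g (x) + \diffplus{g}{x}{\epsilon} f(x) + [f,g]_\epsilon^{+}(x) \ecma
\]
which is exactly Proposition \ref{lemma:dprod} (the $\alpha=1$ specialization of Theorem \ref{lemma:prod1}). Treating $x$ as a parameter and $\epsilon$ as the single variable, I would pass to the limit $\epsilon\to 0$ term by term. Since $f$ and $g$ are differentiable, $\diffplus{f}{x}{\epsilon}\to\diffplus{f}{x}{}$ and $\diffplus{g}{x}{\epsilon}\to\diffplus{g}{x}{}$ by the defining limits of Remark \ref{rem:diff1} and Definition \ref{def:frdiff}, while $f(x)$ and $g(x)$ are constant in $\epsilon$; by Definition \ref{def:fvar11} the operator $[f,g]_\epsilon^{+}(x)$ tends to $[f,g]^{+}(x)$. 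Because every summand on the right converges, the algebra of limits forces the left side to converge to the sum of the limits, giving the first displayed equation; the backward identity follows identically from the backward half of Proposition \ref{lemma:dprod}, the sign of the co-variation term being inherited from that statement.

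It then remains to show that $[f,g]^{+}(x)=[f,g]^{-}(x)=0$ when $f,g\in\fclass{C}{1}$ have uniformly continuous derivatives. Near $x$ such functions have continuous, hence locally bounded, derivatives, so they are locally Lipschitz and therefore of class $\holder{1}$ in $[x,x+\epsilon]$. Applying Theorem \ref{th:limitcovar} with $\alpha_1=\alpha_2=1$ and $\beta=1$, one has $\alpha_1+\alpha_2=2>1=\beta$, so the co-variation limit is bounded yet fails the non-vanishing condition $\alpha_1+\alpha_2=\beta$; hence it vanishes in both directions. Equivalently, Theorem \ref{th:frcovarlim} expresses $[f,g]^{\pm}$ as limits of the products $f^\prime(x\pm\epsilon)\,\Delta^{\pm}_{\epsilon}[g](x)$ and $g^\prime(x\pm\epsilon)\,\Delta^{\pm}_{\epsilon}[f](x)$, in which each increment $\Delta^{\pm}_{\epsilon}\to 0$ while the derivative factor stays bounded by uniform continuity, again yielding $0$. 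Substituting $[f,g]^{\pm}(x)=0$ into the two displayed identities leaves precisely the Leibniz rule $\diffplus{f\,g}{x}{}=\diffplus{f}{x}{}g(x)+\diffplus{g}{x}{}f(x)$ and its backward counterpart.

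The only genuine subtlety, and the step I would be most careful about, is the justification of the term-by-term limit: one must know in advance that the co-variation term actually possesses a limit before asserting that the left side converges. I would resolve this exactly as above, using Theorem \ref{th:limitcovar} (boundedness together with the exponent trichotomy) to guarantee existence of $\llim{\epsilon}{0}{[f,g]_\epsilon^{\pm}(x)}$; once that limit is known to exist, the remaining passages are routine applications of the limit laws and require no interchange of independent limiting processes.
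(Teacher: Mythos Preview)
Your proposal is correct and follows essentially the same approach as the paper: the paper does not supply a separate proof for this proposition, treating it as the immediate $\epsilon\to 0$ limit of Proposition \ref{lemma:dprod} together with the assertion that the co-variation terms vanish for $\fclass{C}{1}$ functions. Your argument fills in precisely those details---the term-by-term passage to the limit and the justification of $[f,g]^{\pm}(x)=0$ via Theorem \ref{th:limitcovar} (or alternatively Theorem \ref{th:frcovarlim})---that the paper leaves implicit.
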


 \section{Discussion}
 \label{sec:disc}
 
    The theory of local fractional derivatives is still immature and there are few established results with certainty  \cite{Adda2001, Babakhani2002, Adda2004, Adda2005, Kolwankar2001, Chen2010, Adda2013}.
 	Historically, the first non-integral order local definition has been proposed by Cherbit \cite{Cherbit1991}  as the notion of "$\alpha$-velocity" in the study of fractals as
 	\[
 	X^{(\alpha)}:= \llim{s}{0}{}\frac{X(t+s)- X(t)}{s^\alpha} \ecma
 	\]
 	where $\alpha \in (0, 1]$ for the trajectory $X(t)$ as a function of time.
 	The concept implicitly implied continuity. 
 	Chen et al.  used the term "difference-quotient" based local fractional derivative \cite{Chen2010}.
  	It should be clear that the terms  $\alpha$-velocity (Cherebit), "difference-quotient" based local fractional derivative (Chen et al.) and  "$\alpha$-derivative" (Ben Adda and Cresson) are equivalent.
  	Regrettably, in the current literature there is still no unification of notations and terms, which provides space for confusion.
  	Properties of this entity are qualitatively different from the properties of ordinary derivatives and the integral fractional derivatives. 
  	Therefore, its study has its own merits. 
  	For example, it has very few non-zero or non-divergent values therefore, it has more of a supplementary character compared to integral derivatives.


\section*{Acknowledgments}
The work has been supported in part by a grant from Research Fund - Flanders (FWO), contract number 0880.212.840.

\bibliographystyle{plain}  
\bibliography{qvar}

%

\end{document}